\definecolor{mygray}{gray}{0.8}
\DeclarePairedDelimiter\ceil{\lceil}{\rceil}
\DeclarePairedDelimiter\floor{\lfloor}{\rfloor}
\newcommand{\vast}{\bBigg@{4}}
\newcommand{\Vast}{\bBigg@{5}}
\journalname{}
\begin{document}

\title{Integrated Cutting and Packing Heterogeneous Precast Beams Multiperiod Production Planning Problem\thanks{This work was developed while the corresponding author was a master degree student at Federal University of Ceara}
}


\author{Kennedy Araujo  \and 
        Tiberius Bonates \and 
        Bruno Prata
}

\institute{K. Ara\'ujo \at
              Department of Applied Mathematics \\ University of Sao Paulo\\
              \email{kennedyanderson94@gmail.com}
           \and
           B. Prata \at
              Department of Industrial Engineering \\ Federal University of Cear\'a \\
            \email{baprata@ufc.br}
        \and
        T. Bonates \at 
        Department of Statistics and Applied Mathematics \\ Federal University of Cear\'a\\
          \email{tb@ufc.br}
}

\date{Received: date / Accepted: date}

\maketitle

\begin{abstract}
We introduce a novel variant of cutting production planning problems named Integrated Cutting and Packing Heterogeneous Precast Beams Multiperiod Production Planning (ICP-HPBMPP). We propose an integer linear programming model for the ICP-HPBMPP, as well as a lower bound for its optimal objective function value, which is empirically shown to be closer to the optimal solution value than the bound obtained from the linear relaxation of the model. We also propose a genetic algorithm approach for the ICP-HPBMPP as an alternative solution method. We discuss computational experiments and propose a parameterization for the genetic algorithm using D-optimal experimental design. We observe good performance of the exact approach when solving small-sized instances, although there are difficulties in finding optimal solutions for medium and large-sized problems, or even in finding feasible solutions for large instances. On the other hand, the genetic algorithm could find good-quality solutions for large-sized instances within short computing times.
\keywords{precast beams \and modular construction \and integer linear programming \and metaheuristics \and genetic algorithms}
\end{abstract}

\section{Introduction}

Nowadays, concrete precast production is increasingly trending in constructions sites. There are great advantages of using such kind of production, such as better and cheaper elements, and a potential to severely shorten construction time as compared to conventional methods. The precast element we consider in this work is a concrete precast beam, which is a kind of beam that is cast in plants away from the construction site, in a controlled environment.

These beams are heterogeneous in the sense that they can vary with respect to curing time, length and the number of traction elements used. We refer to the problem of planning the production of such beams to fulfill the clients demand within a given time horizon as the Heterogeneous Precast Beams Multiperiod Production Planning Problem (HPBMPP).

\cite{araujo2017} proposed four integer programming models for the HPBMPP, considering prestressed precast beams instead of conventional concrete precast beams. 
One of the proposed models minimizes the total idle capacity in the molds along the time horizon, two models to minimize the production makespan and one model for total completion time minimization. The authors also proposed several solution methods, in particular a size reduction heuristic that succeeded in finding high-quality solutions in shorter time and using less memory compared to exact methods. 

In this work, we propose a variant model of the HPBMPP, which consists in the integration of the production of bars, which are used in the precast beam production, into the problem. We divide the bars in two groups: \textit{standard bars} and \textit{leftovers}. Standard bars are new bars of standardized lengths, and leftovers are a type of bar that cannot be readily used in the beam production but can be stored in stock to produce other bars in the future. In this study, we consider that both standard bars and leftovers vary with respect to length. The production of bars to be used in the beam production can be made by the cutting of standard bars or leftovers in stock, or by the process of cutting overlapping leftovers. The overlapping process, consists in merging two or more leftovers to create a larger bar that can be cut to produce a bar of appropriate length that will be used in beam production. In this work, we only consider overlapping of two bars. To the best of our knowledge, the consideration of overlapping bars has not been previously studied.

We consider the integration into a single production planning problem of the cutting process of bars, or of overlapping bars, which must be packed in the molds for the production of a given demand of beams. We refer to this problem as the Integrated Cutting and Packing Heterogeneous Precast Beams Multiperiod Production Planning Problem (ICP-HPBMPP). Note that in this work we consider beams that are not prestressed. The mathematical model we propose is based on the model by \cite{arenales2015new}, which deals with the cutting stock/leftover problem, and on the model by \cite{araujo2017} for the HPBMPP. We consider that the bars needed to supply the beam production can be produced by cutting bars or leftovers in stock or by overlapping leftovers in stock. The stock is static, i.e., we are given an initial stock that is not replenished over the entire time horizon.

The ICP-HPBMPP is of practical interest because optimizing the production of prestressed beams has the potential effect of speeding up overall construction time, while improving the usage of molds and bar stock, while minimizing bars loss. An economical usage of bar stock may result in a reduction of unused bars in the construction site, which can improve the production flow. Furthermore, the reduction of concrete and bar loss may lead to a positive impact in the environment. An optimized process allows factories to accept additional orders due to shorter lead times. Also, the production cost with an optimized process will be lower, which may lead to a reduction of the final product's price, increasing competitiveness.

It is argued in \citep{araujo2017} that the HPBMPP is NP-hard since it includes, as a particular case, the classical one-dimensional cutting stock problem. Thus, the HPBMPP can become too difficult to solve as the dimension of instances increases. The computational results reported in Section \ref{P2:tests} show that the ICP-HPBMPP can be difficult to solve to optimality, justifying the use of decomposition techniques and heuristic procedures to deal with the problem. This also suggests that the HPBMPP is interesting to be studied from a theoretical point of view.

The remainder of this paper is organized as follows. In Section \ref{cwpLiterature} we discuss the literature of similar problems to the ICP-HPBMPP. In Section \ref{P2:Chap_Problem} we formally define the problem, propose an integer linear programming model for its solution, argue about its NP-hardness and propose a lower bound for its optimal objective function value. In Section \ref{P2:ChapPG} we present three constraint programming models for the generation of packing, cutting and overlapping patterns. In Section \ref{P2:GA} we propose a genetic algorithm for the problem under study. In Section \ref{P2:tests} we discuss several computational experiments conducted based on instances generated artificially and discuss the results of the proposed solution methods. In Section \ref{P2:conclusion} we discuss the conclusions and contributions of this chapter, as well as point out research gaps and suggest future work.

\section{Literature review}
\label{cwpLiterature}

To the best of our knowledge ICP-HPBMPP is not defined in the literature, even though the problem has similarities with one-dimensional cutting stock problems (1DCSP) and one-dimensional packing problems (1DPP). On the order hand, 1DCSP, 1DPP, and their variants have been substantially studied in the literature.

As far as one-dimensional cutting and packing problems (C\&P) are concerned, the studies of \cite{gilmore1961linear} and \cite{gilmore1963linear} proposed a column generation algorithm to solve the linear relaxation of large instances of 1DCSP. Such studies served as basis for a number of subsequent works. \cite{stadtler1990one} studied the 1DCSP proposing a heuristic based on the solution of the linear relaxation supplemented by a one-pass branching up procedure. The authors validated the proposed heuristic approach, testing on benchmark instances and on a case of study. \cite{dyckhoff1990typology} introduced a typology of C\&P problems, unifying notions in the literature to guide further research on particular types of those problems. \cite{vance1998branch} proposed two different branch-and-price approaches to find optimal solutions to the 1DCSP. \cite{wascher2007improved} presented a new typology to categorize the types of C\&P problems in the literature between years 1995 and 2004, introducing new categorization criteria. \cite{trkman2007one} proposed a model for the multiperiod one-dimensional cutting stock problems (M1DCSP), considering the use of objects/leftovers in stock. \cite{poldi2010problema} proposed an integer linear model for the M1DCSP, implemented a column generation to solve the linear relaxation, and developed two rounding heuristics for finding integer solutions to the problem. \cite{melega2018classification} proposed a mathematical model for the general integrated lot-sizing and cutting stock problem, and performed a vast classification of the literature of that problem, providing directions for future research.

Regarding the C\&P problems and optimization approaches in precast production, \cite{de2007using} described the problem of minimizing production costs for slabs of precast prestressed concrete joists and introduced a genetic algorithm to solve it. \cite{de2015integer} proposed an integer linear programming model for multiperiod production planning of precast concrete beams, which can be seen as a special case of the HPBMPP. \cite{arenales2015new} introduced a mathematical model for the cutting stock/leftover problem and suggested a column generation technique for finding the problem's linear relaxation solution. \cite{vassoler2017} proposed a mathematical model based on multiperiod cutting stock problem for the production planning problem of joists in trusses slabs industries. The authors suggested a solution method based on column generation to solve the linear relaxation of the problem. \cite{araujo2017} proposed several integer linear programming models for the Heterogeneous Prestressed Precast Beams Multiperiod Production Planning Problem, showed its NP-hardness and suggested a constraint programming model for generating cutting patterns for the problem. The authors also carried out computational experiments to validate the performance of the integer linear programming models. \cite{wang2018framework} introduced a two-hierarchy simulation-genetic algorithm hybrid model for precast production to ensure the on-time delivery of precast components minimizing the production cost, while simultaneously optimizing the resource waste under uncertainty in the processing time of each operation. The authors validated the model with a case study. 

The problem which we study in this work is the integration of the cutting stock/leftover problem proposed by \cite{arenales2015new} and the HPBMPP introduced by \cite{araujo2017}. We explore its solution via exact methods and heuristics methods in the case where instances cannot be solved by the state-of-art solvers.

\section{Problem statement}
\label{P2:Chap_Problem}

In this section we formally define the ICP-HPBMPP and propose an integer linear programming model for its solution based on the models proposed by \cite{arenales2015new} for the Cutting Stock/Leftover Problem (CSLP) and \cite{araujo2017} for the Heterogeneous Prestressed Precast Beams Multiperiod Production Planning Problem (HPPBMPP).

The ICP-HPBMPP consists in finding a feasible production planning to cast certain quantities of prestressed precast concrete beams, possibly of different types, while minimizing the total length of pieces of bars that cannot be used as \textit{leftover}. A leftover is understood here as a piece of bar that can be cut or overlapped in the future to meet new demands and is not considered waste. The beam factory has a fixed amount of bars and bar leftovers with standard lengths in stock that can be used within a given time horizon.

Each mold can only be used to cast one type of beam at a time. It is possible, however, to simultaneously cast beams of different lengths in the same mold, as long as they are of the same type. The total length of the beams produced during a given period in a given mold cannot be greater than the mold's capacity, and the total number of days required to complete the entire production cannot be greater than a given time horizon. After the process of cutting the bars, they are packed in the molds in order to produce the beams, note that different beam types can demand different numbers of bars. For this reason, we refer to this problem as a Cutting and Packing problem. The ICP-HPBMPP process can be seen in Figure \ref{flowchart}.

\begin{figure}[H]
    \centering
    \caption{Cutting and packing production flowchart}
    \includegraphics[width=.9\textwidth]{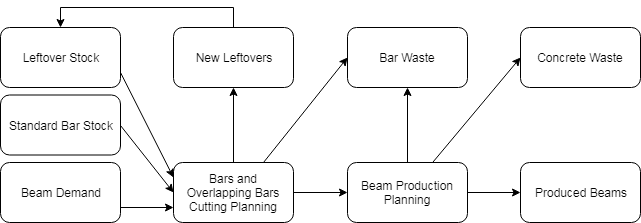}
    \label{flowchart}
\end{figure}

As input of the problem we have a deterministic static demand of beams, with their respective types and lengths, stock of bars and stock of bars leftovers, with their respective lengths. The cutting planning of bars is made for the entire time horizon, resulting in more bars leftovers (which can be used in another production planning), and, possibly, incurring in bar loss. The bars cut will be packed in the molds for the beam production along the given time horizon. After the production of all beams demanded is met, there will usually be concrete waste of the beams and additional loss of bars.

\subsection{Integer linear programming model}

In order to define a model for the ICP-HPBMPP, we make use of the same parameters defined in \citep{araujo2017}, as follows:

\begin{itemize}
    \item $M$: number of molds in which the beams are produced;
    \item $T$: number of available periods to complete the production;
    \item $C$: number of beam types;
    \item $q_c$: number of distinct lengths of beams of type $c$, with $ c = 1,\ldots,C$;
    \item $l(c,1), \; \ldots, \; l(c,q_c)$: real numbers corresponding to the actual lengths of beams of type $c$, with $ c = 1,\ldots,C$;
    \item $d(c,k)$: demand for beams of type $c$ and length $l(c,k)$, with $c = 1, \ldots,C$ and $k = 1,\ldots,q_c$;
    \item $t_c$: integer number corresponding to the curing time (in terms of periods) of beams of type $c$, for $c=1,\ldots,C$;
    \item $L_m$: real number corresponding to the capacity of mold $m$, with $m = 1,\ldots, M$;
    \item $P_i = (c_i, (a^i_1,\ldots,a^i_{q_{c_i}}))$: packing pattern, where $c_i$ stands for the beam type associated with pattern $P_i$ and $a^i_1,\ldots,a^i_{q_{c_i}}$ represent the quantity of each beam of length $l(c_i,1), \; \ldots, \; l(c,q_{c_i})$ in patterns $P_i$, with $i = 1, \ldots, r$, $c_i = 1, \ldots, C$. Note that $r$ represents the number of packing patterns;
    \item $P_0$: special pattern, which is used to denote that a mold is currently being used for the casting of a pattern that began in a previous period and whose production extends at least up to the current period.
\end{itemize}

Note that an idle mold (in other words, a mold that is not being used during a specific period) is not assigned the pattern $P_0$. In fact, it has no pattern assigned to it. 

In order to refer to specific information on a given pattern $P_i = \left({c_i},({a}_1,\ldots,{a}_{q_{c_i}})\right)$, we define the following notation:

\begin{itemize}
    \item $\mathcal{N}_i(c,k)$: number of beams of type $c$ and length $l(c,k)$ that pattern $P_i$ includes. If $c = {c_i}$, then $\mathcal{N}_i(c,k)={a}_k$, with $k \in \{1,\ldots,q_{c_i}\}$; otherwise, $\mathcal{N}_i(c,k) = 0$, for any $k$.
    \item $u(P_i)$: capacity used by $P_i$, i.e. $\displaystyle u(P_i) = \sum_{k=1}^{q_{{c_i}}} l({c_i},k) \cdot P_i({c_i},k)$, with $i=1,\ldots,r$.
    \item $E_i$: number of periods required to produce the beams in $P_i$, with  $i=1,\ldots,r$. This number equals the quantity of consecutive periods in which $P_i$ remains occupying a mold and is precisely the curing time of beams of type ${c_i}$, given by $t_{{c_i}}$.
\end{itemize}

Given a set of patterns $\mathcal{P} = \{P_1, \ldots, P_r\}$, not including $P_0$, we define some important sets as follows:

\begin{itemize}
    \item $Q(m)$: set containing the indices of the patterns in $\mathcal{P}$ whose capacity does not exceed the capacity of the $m$-th mold: $Q(m) = \{i \in \{1,\ldots,r\} : u(P_i) \leq L_m\}$, for $m=1,\ldots,M$. Note that the same pattern can belong to $Q(m)$ and $Q(m')$, with $m$ and $m'$ being two different molds of potentially distinct lengths.
    \item $Q^{\star}(m) = Q(m) \cup \{0\}$;
    \item $S(j)$: set of indexes of the patterns that have curing time $j \in \{1,..., R\}$, with $R = \max \{t_c: c = 1, \ldots, C\}$ being the largest curing time of all beam types present in the problem instance.
\end{itemize}

In what follows, we present the parameters that concern bars and bars leftover: 

\begin{itemize}
    \item $W$: number of different bar lengths;
    \item $V$: number of different bar leftover lengths;
    \item $H$: number of cutting patterns;
    \item $O$: number of overlapping patterns;
    \item $\Gamma$: number of different mold lengths;
    \item $b_{1},\ldots,b_{W}$: bar lengths;
    \item $b_{W+1},\ldots,b_{W+V}$: bar leftover lengths allowed. Note that this data narrows the types of cutting, and overlapping patterns;
    \item $\mathcal{L}_1,\ldots,\mathcal{L}_{\Gamma}$: mold lengths. Note that this data narrows the types of cutting, and overlapping patterns;
    \item $G(\mathcal{L}_{\gamma})$ = set of molds which are of length $\mathcal{L}_{\gamma}$, $\gamma = 1, \ldots \Gamma$;
    \item 
    $H_w$: set of cutting patterns for bar of length $b_w$ that do not include leftovers.
    \item 
    $H_w(v)$: set of cutting patterns for bar type $w$ that include leftovers of length $b_{W+v}$;
    \item 
    $\mathbb{O}$: set of overlapping patterns;
    \item 
    $\mathbb{O}(\gamma)$: set of overlapping patterns that produce bars of length $\mathcal{L}_{\gamma}$.
    \item 
    $I_h = (w_h, (a^h_1,\ldots,a^h_{\Gamma},a^h_{\Gamma+1},\ldots,a^h_{\Gamma + V}))$: cutting pattern used to cut a bar of index $w_h = 1, \ldots, W + V$, with $h = 1, \ldots, H$. Note that $a^h_1,\ldots,a^h_{\Gamma}$ are the number of bars of lengths $\mathcal{L}_1,\ldots,\mathcal{L}_{\Gamma}$ and $a^h_{\Gamma+1},\ldots,a^h_{\Gamma+V}$ are the number of bars of lengths $b_{W+1},\ldots,b_{W+V}$;
    \item 
    $\mathbb{O}_{\mu} = (\gamma_{\mu},( a^{\mu}_1,\ldots,a^{\mu}_V))$: overlapping pattern that generates a bar of length $\mathcal{L}_{\gamma_{\mu}}$, with $\gamma_{\mu} = 1, \ldots, \Gamma$ and $\mu = 1, \ldots, O$. Note that $ a^{\mu}_1,\ldots,a^{\mu}_V$ are the number of bars of lengths $b_{W+1},\ldots,b_{W+V}$;
    \item 
    $D_{c_i}$ = number of bars that a pattern $P_i$ with beam type $c_i$ demands;
    \item $e_w$ = number of bars of length $b_w$ in stock, leftover or otherwise, with $w = 1,\ldots,W+V$;
    \item $a_{v,\mu}$ = number of leftovers of length $b_{W+v}$ in overlapping pattern $\mathbb{O}_{\mu}$, with $\mu = 1, \ldots, O$.
    \item $a_{\gamma,h,w}$ = number of objects of length $\mathcal{L}_{\gamma}$ cut from a bar of length $b_w$ following a cutting pattern $I_h$ that generates no leftover, with  with $w = 1,\ldots,W+V$;
    \item $a_{\gamma,h,w,v}$ = number of objects of length $\mathcal{L}_{\gamma}$ cut from a bar of length $b_w$ following a cutting pattern $I_h$ that generates a leftover of length $b_{W+ v}$, with $w = 1,\ldots,W$ and $v = 1, \ldots, V$.
    \item $f_{h,w}$ = waste resulting from using a cutting pattern $I_h$ to cut a bar of length $b_w$ generating no leftover, with $w = 1,\ldots,W+V$.
    \item $f_{h,w,v}$ = waste resulting from using a cutting pattern $I_h$ to cut a bar  of length $b_w$ generating a leftover of length $b_{W+v}$, with $w = 1,\ldots,W$ and $v = 1, \ldots, V$.
    \item $f_{\mu}$ = waste of bar produced by overlapping pattern  $\mathbb{O}_{\mu}$, with $\mu = 1,\ldots,O$.
\end{itemize}

\noindent We present the decision variables below:

\begin{itemize}
    \item[] \[
x_i^{m,t} = \left\{ \begin{array}{rl}
         1, & \mbox{if the packing pattern $P_i$ starts to be used in} \\ & \mbox{mold $m$ at period $t$ (and its usage, naturally,} \\ & \mbox{lasts for $E_i$ periods)}; \\
         0, & \mbox{otherwise.}\end{array}\right.
\]
    \item[]\[
z_{t} = \left\{ \begin{array}{rl}
         1, & \mbox{if as least one mold is used at period $t$, for $t = 1, \ldots, T$;}\\
         0, & \mbox{otherwise.}\end{array}\right.
\]
    \item[] $y_{h,w}$: number of bars of length $b_w$ cut following a cutting pattern $I_h \in H_w$.
    \item[] $y_{h,w,v}$: number of bars of length $w$ cut following a cutting pattern $I_h \in H_w(v)$ generating a leftover of length $b_{W+v}$.
    \item[] $o_{\mu}: $ number of times the overlapping pattern $\mathbb{O}_{\mu}$ was used,  ${\mu} \in \mathbb{O}$.
\end{itemize}

Note that variables $y_{h,w}, y_{h,w,v}$, and $o_{\mu}$ are nonnegative integer decision variables. We present the integer linear programming model proposed for the ICP-HPBMPP as follows:

\begin{flalign}
\mathclap{\textbf{(ICP) min}} \nonumber\\
& \lambda_1 \sum_{t = 1}^T z_t +
\lambda_2 \sum_{w=1}^W \sum_{h \in H_w} f_{h,w}y_{h,w} & \nonumber
\\
& +
\lambda_3 \sum_{w=1}^W \sum_{v=1}^V \sum_{h \in H_w(v)} f_{h,w,v}y_{h,w,v} \nonumber\\
& + \lambda_4 \Bigg(\sum_{w=W+1}^{W+V} \sum_{h \in H_w}  f_{h,w}y_{h,w} +  \sum_{\mu \in \mathbb{O}} f_{\mu}o_{\mu} \Bigg) & \qquad \qquad \label{FOCWP}
\end{flalign}

\begin{flalign}
\mathclap{\textbf{s. t.}}  \nonumber
\\
& \sum_{i \in Q^{\star}(m)}x_i^{m,t} \leq 1, &\; m=1,\ldots,M, \; t=1,\ldots,T  \label{P2:restr1porforma}
\\
& \sum_{m=1}^M  \sum_{i \in Q(m)} \sum_{t=1}^{T - E^i + 1} P_i(c,k) \: x_i^{m,t} \geq d(c,k), & \;c=1,\ldots,C,\; k=1,\ldots,q_c  \label{P2:restrdemanda}
\\
& (E_i - 1) \; x_i^{m,t} \leq \sum_{\alpha = 1}^{E_i - 1} \; x_0^{m,t + \alpha}, & \; m=1,\ldots,M, \nonumber \\ && t=1,\ldots,T - E_i + 1, \nonumber \\ && i \in Q(m) \label{P2:sequenciamento1}
\\
&x_0^{m,1} = 0 & m = 1,\ldots, M, \label{P2:sequenciamento3}
\\
&x_0^{m,t} \leq \sum_{\gamma = 2}^R \; \sum_{j = \gamma}^R \; \sum_{i \in \{Q(m) \cap S_j  \} }x_i^{m,t - \gamma + 1}, & \; m=1,\ldots,M, \; t=2,\ldots,T  \label{P2:sequenciamento2}
\\
&M \; z_t \geq \sum_{m = 1}^M \left( \sum_{i \in Q^*(m)}\; x_i^{m,t} \right), & t = 1,\ldots, T\label{P2:z_r}
\\
&\sum_{i \: \in \: Q^*(m) }  x_i^{m,t} \geq \sum_{i \: \in \: Q^*(m) } x_i^{m, t+1}, & m = 1,\ldots, M, \;  t = 1,\ldots, T-1 \label{P2:continuidade} 
\\
& \sum_{h \in H_w} y_{h,w} + \sum_{\mu \in \mathbb{O}} a_{w,{\mu}}o_{\mu} \leq e_w, \qquad& w = W+1,\ldots,W+V \label{CQP1}
\\
& \sum_{h \in H_w} y_{h,w} + \sum_{v=1}^V \sum_{h \in H_w(v)} y_{h,w,v} \leq e_w,& w = 1,\ldots,W \label{CQP2}
\end{flalign}

\vspace{-20pt}

\begin{flalign}
& \sum_{w=1}^{W+V} \sum_{h \in H_w} a_{\gamma,h,w}y_{h,w} + 
 \sum_{w=1}^W \sum_{v=1}^V \sum_{h \in H_w(v)}  a_{\gamma,h,w,v}y_{h,w,v} \nonumber\\ & + \sum_{\mu \in \mathbb{O}(\gamma)} o_{\mu} = \sum_{m \in G(\mathcal{L}_{\gamma}) } \sum_{t=1}^T \sum_{i \in Q(m)} D_{c_i} x_{i}^{m,t}, \quad & \gamma = 1, \ldots, \Gamma \label{CQP3}
 \end{flalign}
 
\vspace{-15pt}
 
 \begin{flalign}
&x_i^{m,t} \in \{0,1\}, &m=1,\ldots,M,\; t=1,\ldots,T, \; i \in Q^*(m) \label{P2:varsdecisao}
\\
& z_t \in \{0,1\}, & t = 1,\ldots,T
\\
&y_{h,w} \in \mathbb{Z}_+,& w = 1,\ldots,W,\;h \in H_w\\
&y_{h,w,v} \in \mathbb{Z}_+,& w = 1,\ldots,W, \; v = 1,\ldots,V, \; h \in H_w(v) \label{ybounds}\\
&o_{\mu}  \in \mathbb{Z}_+, &{\mu} \in \mathbb{O} \label{obounds}.
\end{flalign}

The objective function (\ref{FOCWP}) is divided into 4 terms. The first term is the makespan value. The second term defines the waste related to the use of new bars to produce the demand of bars. The third term describes the waste associated to the use of new bars to produce the bars required by beam production while creating new leftovers. Finally, the fourth term specifies the waste corresponding to the bar leftovers in stock that are used to produce the amount of bars required. Note that each term of (\ref{FOCWP}) could alternatively be regarded as an independent objective functions to be minimized. We obtain (\ref{FOCWP}) using the weighted sum method, in which the parameters $\lambda_i \in \mathbb{R_+}$, with $i=1,\ldots,4$, indicate the weight of each objective function term. A solution that minimizes (\ref{FOCWP}) is, therefore, a Pareto optimum~\citep{marler2004survey}.

Constraints (\ref{P2:restr1porforma}) ensure that at most one pattern must be assigned to mold $m$ at period $t$, with the possibility of this pattern being $P_0$. Constraint set (\ref{P2:restrdemanda}) requires that all demands must be satisfied. Constraints (\ref{P2:sequenciamento1}) force that, if pattern $P_i$ is initiated at period $t$, then the next $E_i - 1$ periods shall have the pattern $P_0$ assigned to them (the right-hand side of the constraint remains unconstrained, in case $x_i^{m,t}=0$). Constraint sets (\ref{P2:sequenciamento3}) and (\ref{P2:sequenciamento2}) establish that $P_0$ shall only be used in mold $m$ if there is some pattern associated with a previous period in the same mold, whose production has not yet been completed. 

Each constraint in set (\ref{P2:z_r}) ensures that variable $z_t$ must be 1 if period $t$ is used to produce beams. Constraints (\ref{P2:continuidade}) force that there is no inactive period during beam production in the molds. This means that the production is continuous, i.e., if a mold is used it will be used with no interruption; in other words, if the production stops at a given mold and period, it will not resume in that mold at a subsequent period.

Constraints (\ref{CQP1}) establish that the number of bar leftovers cut plus the number of leftover bars used to produced bars via overlapping does not exceed the stock, note that the cutting of a leftover does not generate leftovers. Constraint set (\ref{CQP2}) ensures that the number of bars cut does not exceed the stock. Constraints (\ref{CQP3}) force that the amount of bars necessary to produce the beams is achieved, assuming that the required amount of bars is the number of bars used by the forms in the entire time horizon. 
Constraints (\ref{P2:varsdecisao})-(\ref{obounds}) define the domains of the decision variables.

The model (ICP) has $\mathcal{O}(MTr + WVH + O)$ variables and $\mathcal{O}(q + MTr + V + W + \Gamma)$ constraints, with $ \displaystyle q = \sum_{i = 1}^C q_c$. Thus, depending on the total number of possible packing, cutting, and overlapping patterns, there may be an excessive number of variables and constraints in the model. We choose to limit the number of packing patterns, which are the more numerous type of pattern, in practice, by using only maximal packing patterns, used successfully by \citep{vance1998branch} and \citep{araujo2017}. We say that a pattern $P_i$ \textit{contains} a pattern $P_j$ if $c_i = c_j$ and $a^i_k \geq a^j_k$, with $k = 1, \ldots, q_{c_i}$.

\begin{proposition}
Restricting the model (ICP) to using only maximal packing patterns does not modify its set of optimal solutions.
\end{proposition}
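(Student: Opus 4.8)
The plan is to prove the sharper statement that the restriction leaves the optimal value of (ICP) unchanged and that the restricted model still possesses an optimal solution of the original — which is what "does not modify its set of optimal solutions" should mean here — via a local exchange argument. One direction is immediate: deleting the variables $x_i^{m,t}$ attached to non-maximal patterns is the same as fixing them to zero, so every feasible solution of the restricted model is feasible for (ICP) with the same objective value, whence the restricted optimum is at least the original optimum. For the reverse direction I would take an arbitrary feasible solution of (ICP) and rewrite it, one occupied mold-period at a time, so that only maximal packing patterns occur, while keeping the objective value fixed; applied to an optimal solution this produces a restricted-feasible solution of the same value.

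The exchange step is the core. Suppose $x_j^{m,t}=1$ for a non-maximal pattern $P_j$; then $j\in Q(m)$, so $u(P_j)\le L_m$. Among the patterns that contain $P_j$ and still fit in mold $m$ — a finite, nonempty family, bounded by the mold length and containing $P_j$ — I pick a containment-maximal one $P_i$ (greedily adjoining beams of type $c_j$ to $P_j$ produces such a $P_i$). Containment forces $c_i=c_j$, hence $t_{c_i}=t_{c_j}$, so $E_i=E_j$ and $D_{c_i}=D_{c_j}$. Now set $x_i^{m,t}:=1$, $x_j^{m,t}:=0$, and leave everything else ($x$ at other indices, all $z_t$, all $y$'s and $o_\mu$'s) unchanged; note $P_i\ne P_j$, and by (\ref{P2:restr1porforma}) neither $P_i$ nor $P_0$ was at $(m,t)$ before, so exactly one pattern still occupies $(m,t)$ afterwards.

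I then verify that the modified point satisfies every constraint; the observation that makes this routine is that $P_i$ and $P_j$ agree on type, curing time, and bar demand, so the sequencing machinery cannot detect the swap, while the only inequality that enlarging a pattern could threaten — the demand constraint (\ref{P2:restrdemanda}) — is in fact helped, since $P_i$ containing $P_j$ gives $\mathcal{N}_i(c,k)\ge\mathcal{N}_j(c,k)$ for all $c,k$. Concretely: (\ref{P2:restr1porforma}) still holds (one pattern at $(m,t)$); (\ref{P2:restrdemanda}) holds as no left-hand side decreases; (\ref{P2:sequenciamento1}) holds with $E_i=E_j$, its right-hand side unchanged and already $\ge E_j-1=E_i-1$, and the range condition $t\le T-E_i+1$ inherited from feasibility of $x_j^{m,t}=1$; (\ref{P2:sequenciamento3}) and (\ref{P2:sequenciamento2}) are untouched because $x_0$ is untouched and $P_i,P_j$ lie in the same $S_j$ and both belong to $Q(m)$; (\ref{P2:z_r}) and (\ref{P2:continuidade}) only see whether mold $m$ is active at period $t$, which is unchanged; (\ref{CQP1}) and (\ref{CQP2}) do not involve $x$; and (\ref{CQP3}) survives because its right-hand side changes by $D_{c_i}-D_{c_j}=0$. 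Since $u(P_i)\le L_m$ we have $i\in Q^{\star}(m)$, so (\ref{P2:varsdecisao})--(\ref{obounds}) hold, and the objective (\ref{FOCWP}) is literally unchanged because its first term depends only on $z$ and the other three only on $y$ and $o$.

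To finish I would iterate: there are at most $MT$ occupied mold-periods, each exchange turns a non-maximal occupant into a maximal one and never introduces a non-maximal occupant anywhere, so after finitely many steps only maximal patterns remain. Combined with the trivial inclusion of feasible regions, this yields the claimed equality of optima and the transfer of an optimal solution to the restricted model. The step I expect to be the main obstacle is precisely the capacity bookkeeping in the exchange — guaranteeing that the enlarged pattern still lies in $Q^{\star}(m)$ — which is exactly why $P_i$ must be taken maximal among the patterns fitting in mold $m$ rather than chosen arbitrarily among all patterns containing $P_j$; everything else reduces to the insensitivity of the sequencing constraints to the swap.
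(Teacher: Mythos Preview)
Your proposal is correct and follows essentially the same approach as the paper: take an optimal solution, replace each non-maximal packing pattern by a maximal one containing it, and observe that neither the makespan nor the bar requirements change because both depend only on the beam type $c_i$ (via $E_i=t_{c_i}$ and $D_{c_i}$). The paper's proof is a two-sentence sketch that asserts exactly this invariance without checking the individual constraints; your constraint-by-constraint verification and explicit handling of the capacity bookkeeping (choosing $P_i$ maximal among patterns fitting in mold $m$) fill in the details the paper leaves implicit.
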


\begin{proof}
Given an optimal solution to model (ICP) that is composed by non-maximal packing patterns we claim that replacing the non-maximal packing patterns with maximal ones that contain such patterns will not have an impact on the makespan. Indeed, the actual number of periods used to fulfill the demand will remain unaffected, given that all packing patterns of a given type have the same associated curing time. In the same way, there will be no changes to the cutting and overlapping patterns used in the optimal solution since the number of bars needed for the beam production will remain unchanged.
\end{proof}


\subsection{NP-hardness}
To argue the ICP-HPBMPP hardness note that for instances where $D_c = 0$, for all $c = 1, \ldots, C$, constraints (\ref{CQP1})-(\ref{CQP3}) are naturally fulfilled and all variables $y_{h,w}, y_{h,w,v}$ and $o_{\mu}$ are set to zero, reducing an instance of ICP-HPBMPP to an HPPMBPP instance involving the minimization of the makespan, up to a constant multiplicative factor. Consequently, the ICP-HPBMPP is a generalization of HPPMBPP, which is already known to be NP-hard \citep{araujo2017}.

\subsection{Objective function lower bound}

Since the ICP-HPBMPP is a NP-hard problem, a lower bound for the optimal objective function value may help in evaluating the quality of feasible solutions in heuristic and exact methods. In order to simplify the presentation of our proposed lower bound for objective function (\ref{FOCWP}) optimal value, we present the following notation. For a given $\gamma \in \{1, \ldots, \Gamma \}$ we define the following sets:

{
\begin{itemize}
    \item $C1_{\gamma} = \{f_{h,w}/a_{\gamma,h,w}: h \in H_w \mkern12mu\wedge \mkern12mu a_{\gamma,h,w} > 0 \mkern12mu\wedge \mkern12mu 1 \leq w \leq W \}$
    \item $C2_{\gamma} = \{ \alpha' f_{h,w,v}/a_{\gamma,h,w,v}: h \in H_w(v) \mkern12mu\wedge \mkern12mu a_{\gamma,h,w,v} > 0\mkern12mu\wedge \mkern12mu 1 \leq w \leq W \mkern12mu\wedge \mkern12mu 1 \leq v \leq V \}$
    \item $C3_{\gamma} = \{ \alpha'' f_{h,w}/a_{\gamma,h,w}: h \in H_w \mkern12mu\wedge \mkern12mu a_{\gamma,h,w} > 0 \mkern12mu\wedge \mkern12mu W + 1 \leq w \leq W + V\}$
    \item $C4_{\gamma} = \{ \alpha''f_{\mu}: \mu \in \mathbb{O}(\gamma)\}$
    \item $\hat{C}_{\gamma} = \{C1_{\gamma} \cup C2_{\gamma} \cup C3_{\gamma} \cup C4_{\gamma} \}$
\end{itemize}
}

An upper bound on the optimal value of model (ICP) is given by Equation (\ref{CPlowerbound}).

\begin{flalign}
   &\ceil*{{\displaystyle\sum_{c = 1}^C t_c \cdot \left( \sum_{k=1}^{q_c} l(c,k) \cdot d(c,k) \right)}/{\displaystyle\sum_{m =1}^M L_m}} 
   + \nonumber\\ &\min_{\gamma \in \{1, \ldots, \Gamma\}}\Bigg\{ \ceil*{{\displaystyle\sum_{c = 1}^C D_c \cdot  \left( \sum_{k=1}^{q_c} l(c,k) \cdot d(c,k) \right)}/{\mathcal{L}_{\gamma}}} \cdot \min\{\hat{C}_{\gamma}\} \Bigg\} \label{CPlowerbound}
\end{flalign}

The first part of Equation (\ref{CPlowerbound}) corresponds to a lower bound for the makespan, while the second part stands for the minimum waste resulting from using molds of some fixed length $\mathcal{L}_{\gamma}$.

\section{Patterns generation} \label{P2:ChapPG}
	
	Instead of carrying out exhaustive enumerations, we generated the desirable packing, cutting, and overlapping patterns for a given instance using constraint programming models, which are described in the remainder of this section.

\subsection{Packing patterns generation}
 Consider the following notation, in addition to the notation presented in Section \ref{P2:Chap_Problem}:

    \begin{itemize}
        \item $K$: the largest number of different lengths among beam types, i.e. $\max q_c$ with $c= 1, \ldots, C$. For example, in an instance with 2 beam types, in which type $1$ has $6$ distinct beam lengths and type $2$ has $4$ distinct beam lengths, we have $K=6$.
        \item $v_i \in \{1,\ldots,C\}$: a decision variable that corresponds to the type of beam used by the pattern $P_i$.
         \item $\gamma_i \in \{1,\ldots,\Gamma\}$: auxiliary decision variable for generating patterns that will be maximal in at least one mold of the problem. It defines in which mold capacity the generated pattern $P_i$ is maximal. 
        \item $A^i \in \mathbb{Z}^{K}$: a vector of decision variables, with $A_j$ representing the number of beams of the length $\ell(v,j)$, for all $j \in \{1,\ldots,K\}$. Given a pattern $P_i$ of type $v$, the nonzero components of vector $A^i$ correspond to $[\mathcal{N}_i(v,j)]_{j=1}^{q_v}$.
        \item $P_i = \left({v_i},({A^i_1},\ldots,{A^i_{q_v}})\right)$: the generated pattern.
    \end{itemize}

For the generation of a packing pattern $P_i$ we propose the model, which is adapted from \citep{araujo2017}.
    
    \begin{flalign}
        &1 \leq v_i \leq C,   \label{P2:ppr1}\\
        &1 \leq \gamma_i \leq \Gamma, \label{P2:ppr1.2}\\
        &A^i_j = 0, \text{ if } v_i = c,  &      &c = 1,\ldots,C, \nonumber \\
        & &&j = q_c+1,\ldots, K \label{P2:ppr2}\\
        &\mathcal{L}_{m} - \min_{j = 1, \ldots, q_c}(l(c,j)) < \sum_{j=1}^{q_c} l(c,j) \cdot A^i_j \leq \mathcal{L}_{m}, \text{ if } (v_i = c \land \gamma_i = m), &&c = 1,\ldots,C, \nonumber\\
         & &&m = 1, \ldots, \Gamma, \label{P2:ppr3}\\
        &A^i_k \in \mathbb{Z_+}, &  & \;  k = 1,\ldots, K. \label{P2:ppr4}
    \end{flalign}
    
Constraint (\ref{P2:ppr1}) implies that the pattern type has domain $\in \{1, \ldots, C \}$. Constraint (\ref{P2:ppr1.2}) defines the length of the molds in which the generated pattern should be maximal. Constraint set (\ref{P2:ppr2}) implies that if the generated pattern is of type $ v $ then it includes no beam of size $l(v,j)$, such that $ j> q_v $. Constraint set (\ref{P2:ppr3}) imposes   that the capacity used by the generated pattern is simultaneously larger than the mold length minus the shortest beam length from its type and no larger than the length of the actual mold. The empty pattern is, therefore, not generated and has to be manually included in the final set of patterns. We utilized the solver CPLEX CP Optimizer to enumerate all the solutions of model (\ref{P2:ppr1})-(\ref{P2:ppr4}).

\subsection{Cutting patterns generation}

In this section we propose a constraint programming model for cutting patterns generation. The decision variables are given below:

\begin{itemize}
    \item $w_{h}$: index of the bar that will be cut in the generated cutting pattern $I_h$;
    \item $A^h_i$: number of items of length $\mathcal{L}_i$ cut in the pattern, for $i \in \{1, \ldots, \Gamma\}$;
    \item $A^h_i$: number of items of length $b_{W+i}$ cut in the pattern, for $i \in \{\Gamma+1, \ldots, \Gamma+V\}$;
    \item $I_h = \left({w_h},({A^h_1},\ldots,{A^h_{\Gamma}},{A^h_{\Gamma+1},\ldots, {A^h_{\Gamma + V}} })\right)$: the generated pattern.
\end{itemize}

The proposed constraint model for generating a cutting pattern $H_h$ is given by Equations (\ref{PPR_P2_1})-(\ref{PPR_P2_5}).

\begin{align}
    &1  \leq w_{h}  \leq W + V, \label{PPR_P2_1}
    \\ 
    &\sum_{i = 1}^{\Gamma} \mathcal{L}_i \cdot A^h_i + \sum_{i = 1}^{V} b_{W+i} \cdot A^h_{\Gamma + i}  \leq \text{element}(w_{h},b), \label{PPR_P2_2}
    \\
    &\#\{i \in \{\Gamma + 1, \ldots, \Gamma + V \}| A^h_i > 0\} = 1, \label{PPR_P2_3}
    \\
    & A^h_i = 0, \text{ if } w_{h} > W, & & \; i = \Gamma + 1, \ldots, \Gamma + V \label{PPR_P2_4}
    \\
    &A^h_i \in \mathbb{Z_+},  & &\; i = 1,\ldots, \Gamma+V. \label{PPR_P2_5}
\end{align}

Constraint (\ref{PPR_P2_1}) defines the domain of each decision variables $w_h$. Each $w_h$ variable determines defines the bar that will be cut in the current pattern to generate items. If $ 1 \leq w \leq W $, the bar that will be cut is a new bar. If $ W + 1 \leq w \leq W+V$, the bar that will be cut is a bar leftover. Constraint (\ref{PPR_P2_2}) states that the total length of items cut in the pattern must be shorter than the length of the bar used to cut such pattern, with expression element($w_{h},b$) standing for the $w_{h}$-th element of array $b$ \citep{nicolasbeldiceanumatscarlsson2018}. Constraint set (\ref{PPR_P2_3}) implies that a cutting pattern only generates one type of leftover. Constraint (\ref{PPR_P2_4}) implies that a leftover does not generate more leftovers. We utilized the CPLEX CP Optimizer to enumerate all the solutions of model (\ref{PPR_P2_1})-(\ref{PPR_P2_5}).

\subsection{Overlapping patterns}

In order to enrich the problem by allowing the possibility of using overlapping bars, we recall that an overlapping pattern $\mathbb{O}_{\mu}$ is a tuple $\mathbb{O}_{\mu} = (\gamma_{\mu},( a^{\mu}_1,\ldots,a^{\mu}_V))$. Note that $\gamma$ is associated to the length of the bar that is generated in such pattern. Such length must be equal to the capacity of some mold, since we are only required to produce bars via overlapping that are used for beam production. A bar produced by overlapping is only produced from leftovers in stock.

In order to simplify the model's notation, consider the following decision variables:

\begin{itemize}
    \item $A^{\mu}_i$: decision variable that represents number of items $b_{W+i}$ used in the overlapping pattern, for $i \in \{1, \ldots, V\}$.
    \item $\gamma_{\mu} \in \{1,\ldots,\Gamma\}$: decision variable that defines the length of the bar produced by the overlapping pattern.
    \item $f \geq 0$: decision variable that expresses the waste of bar associated to the overlapping pattern to produce a bar of length $\mathcal{L}_{\gamma}$.
    \item $\mathbb{O}_{\mu} = \left({\gamma},({A^{\mu}_1},\ldots,{A^{\mu}_{V}})\right)$: the generated pattern.
\end{itemize}

The following constraint programming model can be used to produce an overlapping pattern:

\begin{flalign}
    &1 \leq \gamma_{\mu} \leq \Gamma, \label{sp_gamma}\\
    &\sum_{i = 1}^{V} A^{\mu}_i b_{W+i} \geq \mathcal{L}_{\gamma_{\mu}} + \epsilon, \label{capacity}\\
    & \sum_{i = 1}^{V} A^{\mu}_i = 2,\label{sp_2}\\
    & f = \mathcal{L}_{\gamma_{\mu}} - \sum_{i = 1}^{V} A^{\mu}_i b_{W+i}. \label{sp_waste}
\end{flalign}

Constraint (\ref{sp_gamma}) ensures that the length of the bar produced is one of the possible mold lengths. Constraint (\ref{capacity}) forces that the total length of the chosen leftovers is greater than the length of the bar produced via overlapping plus a constant $\epsilon$ which is the loss of the bar resulting from the overlapping process. Constraint (\ref{sp_2}) defines that only 2 leftovers are used in the production of the bar made via overlapping. Constraint (\ref{sp_waste}) defines the bar waste resulting from the overlapping pattern.

The constraint programming model for overlapping pattern generation is sufficiently flexible to accommodate the production planner's necessities. In a more general setting, we could require that a bar made via overlapping can only be produced by using more than 2 and no more than a predefined number of leftovers and specify the $\epsilon$ value to be proportional to the number of leftovers used in such pattern.

\section{Genetic algorithm for the ICP-HPBMPP}
\label{P2:GA}

In this section we propose a genetic algorithm to solve the ICP-HPBMPP, formalize the solution representation chosen, the solution fixing procedure, the selection, mutation, and crossover operators, as well as the initial population generation, population restart, and local search.

\subsection{Solution representation}

The solution representation consists of a 2-row matrix, in which each column $j$ consists of the genes $a_j$ and $x_j$, where $a_j$ is a pattern index and $x_j$ is the number of times the pattern represented by $a_j$ is used. The number of columns of this representation is variable and can be at most $r+H+O$. The $a_j$ genes can have values in $\{ 1,\ldots,r+H+O$\}, in which the values $1, \ldots, r$ represent the packing patterns indices, the values $r + 1, \ldots, r + H$ correspond to the cutting patterns indices, and the values $r + H + 1, \ldots, r + H + O$ are associated with the indices of overlapping patterns. In Figure \ref{fig:rep_gen}, we show a generic scheme of the solution representation, in which the number of columns is exactly $r+H+O$.

\begin{figure}[H]
    \centering
    \caption{Solution representation}
    \includegraphics[width=.4\linewidth]{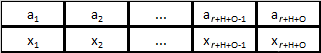}
    \label{fig:rep_gen}
\end{figure}


In order to illustrate the solution representation we first present instance cwp000, generated randomly, in Table \ref{tab:cwp000}. Its respective packing, cutting, and overlapping patterns are presented in Tables \ref{tab:cwp000p}, \ref{tab:cwp000c}, and \ref{tab:cwp000o}, respectively.

\begin{table}[H]
\centering
\caption{Instance cwp000 description}
\begin{tabular}{l} 
\hline
\multicolumn{1}{c}{\textbf{Instance cwp000}}  \\ 
\hline
$C =1 \qquad M = 5 \qquad T = 3$                        \\
$W = 1 \qquad V = 4$                               \\
$L = (5.95, 5.95, 5.95, 5.95, 11.95)$         \\
$t_1 = 1$                                     \\
$q_1 = 2$                                     \\
$D_1 = 1$                                     \\
$l(1,\cdot) = (1.12, 3.3)$                    \\
$d(1,\cdot) = (5, 10)$                        \\
$b = {(}12, 2, 5, 6, 8{)}$                    \\
$e = (30, 16, 28, 25, 29)$                    \\
$\epsilon = 0.3$                              \\
\hline
\end{tabular}
\label{tab:cwp000}
\end{table}


\begin{table}[H]
\caption{Packing patterns for instance cwp000}
\centering
\begin{tabular}{ccccc}
\hline
\textbf{ID} & \textbf{\begin{tabular}[c]{@{}c@{}}Beam\\ type\end{tabular}} & \textbf{Capacity} & \textbf{$a^p_1$} & \textbf{$a^p_2$} \\ \hline
1 & 1 & 5.6 & 5 & 0 \\
2 & 1 & 5.54 & 2 & 1 \\
3 & 1 & 11.2 & 10 & 0 \\
4 & 1 & 11.14 & 7 & 1 \\
5 & 1 & 11.08 & 4 & 2 \\
6 & 1 & 11.02 & 1 & 3 \\ \hline
\end{tabular}
\label{tab:cwp000p}
\end{table}


\begin{table}[H]
\caption{Cutting patterns for instance cwp000}
\centering
\begin{tabular}{ccccccccc}
\hline
\textbf{ID} & \textbf{\begin{tabular}[c]{@{}l@{}}Bar\\ cut\end{tabular}} & \textbf{Capacity} & $a^h_1$ & $a^h_2$ & $a^h_3$ & $a^h_4$ & $a^h_5$ & $a^h_6$ \\ \hline
7 & 1 & 5.95 & 1 & 0 & 0 & 0 & 0 & 0 \\
8 & 1 & 7.95 & 1 & 0 & 1 & 0 & 0 & 0 \\
9 & 1 & 9.95 & 1 & 0 & 2 & 0 & 0 & 0 \\
10 & 1 & 11.95 & 1 & 0 & 3 & 0 & 0 & 0 \\
11 & 4 & 5.95 & 1 & 0 & 0 & 0 & 0 & 0 \\
12 & 5 & 5.95 & 1 & 0 & 0 & 0 & 0 & 0 \\
13 & 1 & 11.95 & 1 & 0 & 0 & 0 & 1 & 0 \\
14 & 1 & 10.95 & 1 & 0 & 0 & 1 & 0 & 0 \\
15 & 1 & 11.9 & 2 & 0 & 0 & 0 & 0 & 0 \\
16 & 1 & 11.95 & 0 & 1 & 0 & 0 & 0 & 0 \\ \hline
\end{tabular}
\label{tab:cwp000c}
\end{table}


\begin{table}[H]
\caption{Overlapping patterns for instance cwp000}
\centering
\begin{tabular}{ccccccc}
\hline
\textbf{ID} & \textbf{\begin{tabular}[c]{@{}c@{}}Bar\\ generated\end{tabular}} & \textbf{\begin{tabular}[c]{@{}l@{}}Waste \\ of bar\end{tabular}} & \textbf{$a^{\mu}_1$} & $a^{\mu}_2$ & $a^{\mu}_3$ & $a^{\mu}_4$ \\ \hline
17 & 1 & 1.05 & 1 & 1 & 0 & 0 \\
18 & 1 & 4.05 & 0 & 2 & 0 & 0 \\
19 & 1 & 2.05 & 1 & 0 & 1 & 0 \\
20 & 1 & 6.05 & 0 & 0 & 2 & 0 \\
21 & 1 & 5.05 & 0 & 1 & 1 & 0 \\
22 & 1 & 8.05 & 0 & 0 & 1 & 1 \\
23 & 1 & 4.05 & 1 & 0 & 0 & 1 \\
24 & 1 & 7.05 & 0 & 1 & 0 & 1 \\
25 & 1 & 10.05 & 0 & 0 & 0 & 2 \\
26 & 2 & 4.05 & 0 & 0 & 0 & 2 \\
27 & 2 & 1.05 & 0 & 1 & 0 & 1 \\
28 & 2 & 2.05 & 0 & 0 & 1 & 1 \\ \hline
\end{tabular}
\label{tab:cwp000o}
\end{table}

Note that ID is associated with the pattern indices. An optimal solution for the cwp000 instance is shown as the chromosome in Figure \ref{fig:solucao_numerica}.

\begin{figure}[H]
    \centering
    \caption{Example of a feasible solution of instance cwp000}
    \includegraphics{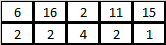}
    \label{fig:solucao_numerica}
\end{figure}

For the solution in Figure \ref{fig:solucao_numerica} we obtain an objective function value of 2.1, with makespan of 2 periods and bar waste of 0.1m. Figure \ref{fig:gannt_numerica} shows that packing patterns with indices 2 and 6, were used 4 and 2 times, respectively. Due to the fact that we are restricted to using only maximal packing patterns in their respective molds and a given packing pattern is maximal with respect to only one distinct length of mold, we infer that packing pattern 2 is associated with molds of length 5.95m, and packing pattern 6 is associated with molds of length 11.95m. Therefore, we need to produce a total number of 2 bars of length 5.95m and 6 bars of length 11.95, since the beam type produced by each solution packing patterns requires only one bar. The cutting patterns used are those with indices 11, 15 and 16, and their frequencies are 2, 1, and 2, respectively. None of the overlapping patterns was selected in the solution.

The production planning consists of the specification of the exact quantity of bars required for the beam production as long as the available stock of bars is not violated. Thus, the solution represented encoded in the chromosome in Figure \ref{fig:solucao_numerica} is feasible.

\begin{figure}[H]
    \centering
    \caption{Gantt chart for an optimal solution of instance cwp000}
    \includegraphics[width=.5\linewidth]{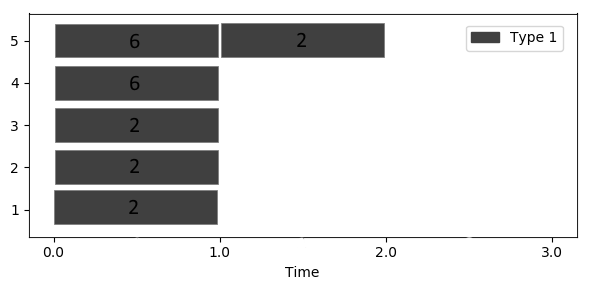}
    \label{fig:gannt_numerica}
\end{figure}

\subsection{Initial population generation}

Since we typically need a large quantity of individuals to generate a population, deterministic methods are not the best choice, despite the high-quality solutions produced by them. We propose a pseudorandom approach to generate a large quantity of solutions, which is described in Algorithm \ref{gerar_ind}.

\begin{algorithm}[H]
 \KwData{Instance, Set of Packing Patterns, Set of Cutting Patterns, Set of Overlapping Patterns}
 \KwResult{Feasible solution}
    Initialize $solution$ with all patterns with their respective frequencies set to zero.\\
    \While{Beam demands is not fulfilled}{
        $pac_p \leftarrow$ random packing pattern that has not yet been selected.\\
        \If{There is some beam in $pac_p$ whose demand is unfulfilled}{
        Increment the number of times that $pac_p$ is used in $solution$ until all beams in $pac_p$ have their demands fulfilled.\\
        }
    }
    Calculate the number of bars needed according to the packing patterns frequencies\\
    \For{each mold length $\gamma$}{
        \While{(number of bars of length $\mathcal{L}_{\gamma}$ needed was not reached) $\lor$ (there is at least one cutting pattern not selected)}{
            $cut_p \leftarrow$ random cutting pattern that generates bars of length $\mathcal{L}_{\gamma}$ that has yet not been selected.\\
            
            bars\_needed $\leftarrow$ number of bars of length $\mathcal{L}_{\gamma}$ required.\\
            n $\leftarrow$ number of times $cut_p$ can be added to $solution$ without violating bars stock.\\
            Increment $cut_p$ frequency in $solution$ by $max(bars\_needed,n)$ times.\\
        }
        \While{number of bars of length $\mathcal{L}_{\gamma}$ needed was not reached }{
            $ove_p \leftarrow$ random overlapping pattern that generates a bar of length $\mathcal{L}_{\gamma}$ that has not yet been selected.\\
            bars\_needed $\leftarrow$ number of bars of length $\mathcal{L}_{\gamma}$ required.\\
            n $\leftarrow$ number of times $ove_p$ can be added to $solution$ without violating bars stock.\\
            Increment $ove_p$ frequency in $solution$ by $max(bars\_needed,n)$ times.\\
        }
    }
    Remove from $solution$ the genes associated to patterns that are not used\\
    \Return{solution}
 \caption{Generate pseudo-random solution}
 \label{gerar_ind}
\end{algorithm}

We call this method pseudorandom because we choose the patterns to add to the solution randomly, although each pattern frequency in the solution is computed in such a way as to respect stock and satisfy the demand. The time complexity of the Algorithm \ref{gerar_ind} is $\mathcal{O}(Pq_c + \Gamma(H+O))$. Generating the initial population consists of creating of a number of individuals with the use of Algorithm \ref{gerar_ind} and selecting the best of them based on their fitness value according to the required population size.

\subsection{Fitness function and selection operator}

We use the objective function \ref{FOCWP} from the mathematical model (ICP) as the fitness function to evaluate the solution quality of a given chromosome. The selection operator consists of the process of selecting the best distinct solutions with respect to their respective fitness function value, i.e., the individuals with the lowest fitness values. 

\subsection{Crossover operators}

In this subsection we propose two alternatives to use as crossover operators: crossover type 1, and crossover type 2. Given two parents, both crossover types generate one offspring, which consists of a new solution (chromosome). 

In crossover type 1, we preserve all pattern indices from both parents, but the number of times each pattern is used in the offspring corresponds to the mean of the number of times they are used by the parents rounded to the largest integer. For each gene there is a probability of mutation. When the mutation occurs the number of times that the current pattern is used in such gene is set to zero. After this crossover process, if the generated offspring results in an infeasible solution, an iterative procedure, shown in Algorithm \ref{fix_solution}, is applied for its correction. If some pattern from the current offspring is used zero times, the gene associated to it is removed from the chromosome.

In crossover type 2, we first initiate the offspring using all patterns that used in both parents with their respective frequencies set to zero. For the genes that have patterns that are part of both parents simultaneously, their respective frequencies are set as the mean of their frequencies in the parents rounded to the largest integer. For each remaining gene we have a probability of 50\% of setting its respective frequency to be equal to the originating parent frequency or keeping it equal to zero. If the resulting offspring is not feasible, the fixing procedure, shown in Algorithm \ref{fix_solution}, is applied to it and all patterns with final frequencies equal to zero have their respective genes removed from the chromosome.

\subsection{Mutation operator}

The mutation of an individual consists of choosing one pattern $p_{1}$ that is in the solution, and in the addition of one pattern $p_{2}$, chosen randomly, that is not part of the solution. The number of times that $p_{2}$ is used becomes the number of times that $p_1$ is used, and the number of times that $p_1$ is used is set to zero. If the solution is infeasible after this procedure we apply the fixing phase to it. This process is frequently required in practice and is described in this next subsection.

\subsection{Infeasible solution fixing}

Since that the proposed genetic operators of crossover and mutation can affect the feasibility of solutions, we must define a procedure to fix infeasible solutions to turn them into feasible ones before.

A chromosome may be an infeasible solution due to different reasons, as follows:

\begin{enumerate}
    \item Infeasibility type 1, due to beam demand: the frequencies of packing patterns in the solution are not enough to fulfill the beam demands;
    \item Infeasibility type 2, due to bar stock: the number of bars which are used in cutting and overlapping patterns exceed the bar stock;
    \item Infeasibility type 3, due to inconsistent number of bars produced and required: the number of necessary bars generated by cutting and overlapping patterns is different from the number of bars that beam production requires.
\end{enumerate}

If we detect any of those kinds of infeasibility, we must apply the infeasible solution fixing phase, which consists of Algorithm \ref{fix_solution}. Each infeasibility type is treated in a particular procedure: Algorithms \ref{fix_solution1}, \ref{fix_solution2}, and \ref{fix_solution3} are used to fix infeasibility type 1, 2, and 3, respectively. 

\begin{algorithm}[H]
 \KwData{Infeasible chromosome}
 \KwResult{Potentially modified chromosome}
    Initialize produced beams with zeros;\\
    demand\_fulfilled $\leftarrow$ false;\\
    \For{each packing pattern $P_i$ in Chromosome}{
        \uIf{demand\_fulfilled = false}{
            \For{cont = 1,\ldots, frequency($P_i$)}{
                Update produced beams;\\
                \If{produced beams fulfill the beam demands}
                {
                    demand\_fulfilled $\leftarrow$ true;\\
                    frequency($P_i$) $\leftarrow$ cont;\\
                    break;\\
                }
            }
        }\Else{
            frequency($P_i$) $\leftarrow$ 0\\
            }
    }
    \Return{Chromosome}
 \caption{Remove unnecessary packing patterns}
 \label{fix_solution0}
\end{algorithm}

\begin{algorithm}[H]
 \KwData{Infeasible chromosome}
 \KwResult{Potentially feasible chromosome}
    \While{Infeasibility type 1 = true}{
        \For{each beam type c}{
            \For{each beam length $l_c$ whose demand is not fulfilled}{
                \For{each packing pattern $P_i$ with type c in Chromosome}{
                    \If{frequency of $l_c$ in $P_i$ > 0}{
                        Increment frequency($P_i$) until the demand of $l_c$ is achieved;\\
                        break;\\
                    }
                }
            }
        }
    }
    \Return{Chromosome}
 \caption{Fix chromosome with respect to infeasibility 1}
 \label{fix_solution1}
\end{algorithm}

\begin{algorithm}[H]
 \KwData{Infeasible chromosome}
 \KwResult{Potentially feasible chromosome}
    Calculate the \#bars used;\\
    \For{each standard bar or bar leftover $w$}{
        \If{\#bars $w$ used > stock of $w$ bars}{
            \For{each cutting pattern $I_h$ that uses $w$ in Chromosome}{
                rt $\leftarrow$ \#bars $w$ used  - stock of $w$ bars; \\
                frequency($I_h$) $\leftarrow$ frequency($I_h$) - min(frequency($I_h$), rt);\\
                Update the \#bars $w$ used;\\
                \If{\#bars $w$ used > stock of $w$ bars}{
                    break;\\
                }
            }
            
            \For{each overlapping pattern $O_{\mu}$ that uses $w$ in Chromosome}{
                rt $\leftarrow$ \#bars $w$ used  - stock of $w$ bars; \\
                rt $\leftarrow$ $\floor*{\dfrac{rt}{\text{\#bars w in } O_{\mu}}}$\\
                frequency($O_{\mu}$) $\leftarrow$ frequency($O_{\mu}$) - min(frequency($O_{\mu}$),rt);\\
                Update the \#bars $w$ used;\\
                \If{\#bars $w$ used > stock of $w$ bars}{
                    break;\\
                }
            }
        }
    }
    \Return{Chromosome}
 \caption{Fix chromosome with respect to infeasibility 2}
 \label{fix_solution2}
\end{algorithm}

\begin{algorithm}[H]
 \KwData{Infeasible chromosome}
 \KwResult{Potentially feasible chromosome}
    Calculate the \#bars generated by cutting and overlapping patterns;\\
    Calculate the \#bars that beam production requires according to the frequency of packing patterns;\\
    \For{each bar $\gamma$ generated}{
        \If{\#bars $\gamma$ generated > \#bars $\gamma$ that beam production requires}{
            \For{each cutting pattern $I_h$ that generates only bars $\gamma$}{
                rt $\leftarrow$ \#bars $\gamma$ generated  - \#bars $\gamma$ that beam production requires; \\
                rt $\leftarrow$ $\ceil*{\dfrac{rt}{\text{\#bars $\gamma$ generated by } I_{h}}}$\\
                frequency($I_h$) $\leftarrow$ frequency($I_h$) - min(frequency($I_h$), rt);\\
                Update the \#bars $\gamma$ generated;\\
            }
        } 
        
        \If{\#bars $\gamma$ generated > \#bars $\gamma$ that beam production requires}{
            \For{each overlapping pattern $O_{\mu}$ that generates a bar $\gamma$}{
                rt $\leftarrow$ \#bars $\gamma$ generated  - \#bars $\gamma$ that beam production requires; \\
                frequency($O_{\mu}$) $\leftarrow$ frequency($O_{\mu}$) - min(frequency($O_{\mu}$), rt);\\
                Update the \#bars $\gamma$ generated;\\
            }
        }

        \If{\#bars $\gamma$ generated < \#bars $\gamma$ that beam production requires}{
            \For{each cutting pattern $I_h$ that generates only bars $\gamma$}{
                rt $\leftarrow$ \#bars $\gamma$ that beam production requires - number bars $\gamma$ generated; \\
                rt $\leftarrow$ $\floor*{\dfrac{rt}{\text{\#bars $\gamma$ generated by } I_{h}}}$\\
                frequency($I_h$) frequency $\leftarrow$ frequency($I_h$) + min(rt,stock of $\gamma$ bars remaining) ;\\
                Update the \#bars $\gamma$ generated;\\
            }
        } 
        
        \If{\#bars $\gamma$ generated < \#bars $\gamma$ that beam production requires}{
            \For{each overlapping pattern $O_{\mu}$ that generates a bar $\gamma$}{
                Increment frequency($O_{\mu}$) until (\#bars $\gamma$ generated $\geq$ \#bars $\gamma$ that beam production requires) or the stock is violated with new increment;\\
                Update the \#bars $\gamma$ generated;\\
            }
        }
    }
    \Return{Chromosome}
 \caption{Fix chromosome with respect to infeasibility 3}
 \label{fix_solution3}
\end{algorithm}

\begin{algorithm}[H]
 \KwData{Infeasible chromosome}
 \KwResult{Possible feasible chromosome}
    \uIf{Infeasibility type 1 = true}{
        Call Algorithm \ref{fix_solution1};\\ 
    }\Else{
        Call Algorithm \ref{fix_solution0};\\}
    \If{Infeasibility type 2 = true}{
        Call Algorithm \ref{fix_solution2};\\ 
    }
    \If{Infeasibility type 3 = true}{
        Call Algorithm \ref{fix_solution3};\\ 
    }
    \Return{chromosome}
 \caption{Solution fixing procedure}
 \label{fix_solution}
\end{algorithm}

The unnecessary packing patterns procedure, shown in Algorithm \ref{fix_solution0}, in Appendix A, works like a solution treatment phase, which is not a necessary part of the solution fixing process, although applying such procedure we may improve solution quality and simplify the fixing process, i.e., it would be less likely that the modified solutions could not be fixed. The procedure consists of decreasing the frequency of packing patterns after the beam demands are already fulfilled if there are beam surplus.

In Figure \ref{fig:crossover}, we show an example of the crossover operators, with offspring 1 as the solution generated by crossover operator type 1, and offspring 2 as the solution created by crossover operator type 2. Note that the fixing procedure was applied for offspring 2 and not for offspring 1. In Figure \ref{fig:mutation}, we show an example of the proposed mutation operator. The resulting chromosome is infeasible, therefore, the solution fixing procedure must be applied. If the application of the solution fixing procedure to a given chromosome could not turn it into a feasible solution, the chromosome is discarded.

\begin{figure}[H]
    \centering
    \caption{Crossover operators}
    \includegraphics[width=.45\linewidth]{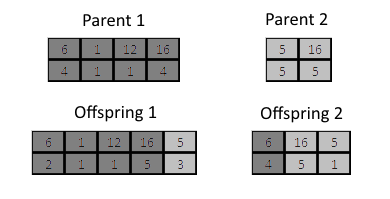}
    \label{fig:crossover}
\end{figure}

\begin{figure}[H]
    \centering
    \caption{Mutation operator and solution correction}
    \includegraphics[width=.45\linewidth]{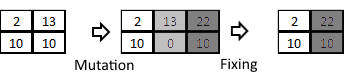}
    \label{fig:mutation}
\end{figure}

\subsection{Population restart}

The population restart consists of the creation of a new population to compose the next generation after a predefined number of epochs.  We apply a population restart after a given number of generations with no improvement of the best-fitness value. We divide such procedure into three parts, as follows: 1. selecting a certain number of the best-fitness individuals from the current population; 2. generating a number new pseudo-random individuals; 3. creating a new population with individuals from steps 1 and 2 and applying the selection operator to form the next population.

\subsection{Local search}

In order to improve the quality of final solutions, we apply a local search to every individual of the final population. For the local search we use the \textit{insert} movement, which consists of, given two genes indices $i$ and $k$, with $i < k$, inserting the gene $i$ one position in front of $k$-th gene, i.e., all the genes between positions $i$ and $k + 1$ are moved one position to the right after the insertion of the $k$-th gene. In Figure \ref{fig:localsearch} an insert movement neighbor is shown for a given solution after inserting 2nd gene in front of 5th gene.

\begin{figure}[H]
    \centering
    \caption{Insert movement}
    \includegraphics[width=.25\linewidth]{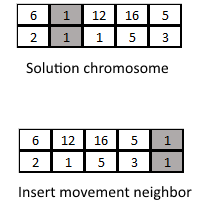}
    \label{fig:localsearch}
\end{figure}

Considering the function INSERT(solution, i, k) as the movement of insertion given indices $i$ and $k$, we describe the local search procedure in the Algorithm \ref{insertN}.

\begin{algorithm}[H]
 \KwData{\textit{InitialSolution}}
 \KwResult{\textit{BestSolution}}
    {$BestSolution \leftarrow InitialSolution;$} \\
    \For{$i = 1, \ldots, n\ell - 1$}{
        \For{$k = k + 1, \ldots, n\ell$}{
            {$neighbor\leftarrow \text{INSERT}(InitialSolution, i, k)$;}\\
            \If{$makespan(neighbor) < makespan(BestSolution)$}{
            $BestSolution \leftarrow neighbor$;
            }
        }
    }
    \Return{BestSolution;}
 \caption{Insert neighborhood}
 \label{insertN}
\end{algorithm}

\subsection{Algorithm description}

In order to describe the proposed genetic algorithm we define the following parameters: population size (TP), number of generations (NG), crossover type (CRS), number of pseudo-random solutions generated for the initial population and restart selections (AS), mutation probability (MUT), number of generations with no fitness improvement to apply population restart (RST), and the number of individuals from the current population selected to be used in restart operator procedure (TER).

The proposed genetic algorithm can be seen as a steady-state model since only one new individual is generated per generation, even though we generate several individuals in the formation of the initial population and in a population restart process. A simplified scheme of the proposed genetic algorithm is shown in the flowchart in Figure \ref{fig:GA_flow}.

\begin{figure}[H]
    \centering
    \caption{Simplified flowchart of proposed genetic algorithm }
    \includegraphics[width = .7\linewidth]{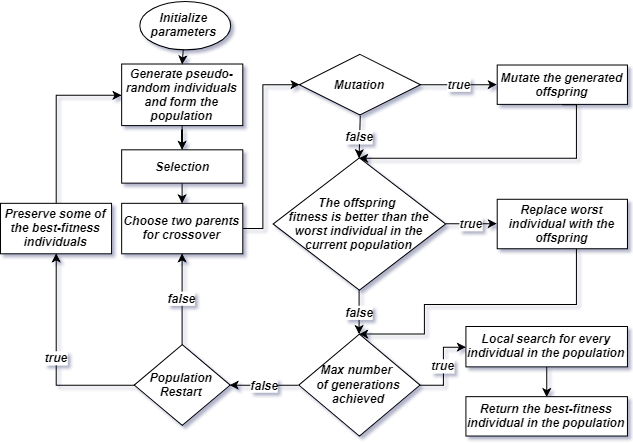}
    \label{fig:GA_flow}
\end{figure}

\section{Computational experiments}
\label{P2:tests}

In this section we present computational experiments on a set of benchmark instances that were produced with the intent to mimic real-world scenarios, to evaluate the solution methods proposed in this study.

The patterns corresponding to each test instance were generated using the constraint programming solver IBM ILOG CPLEX 12.8 CP Optimizer. For the integer programming model implementation we adopted the solver IBM ILOG CPLEX 12.8. Both solvers were used with Concert technology using the C++ programming language. The genetic algorithms were also developed with the C++ programming language.

We carried out every test in this paper on a Linux Ubuntu 18.04 64bits machine with 8GB of memory and Intel Core i5-3470 CPU 3.20 GHz $\times 4$ processor. We compiled the created codes with the GNU GCC 7.3.0 compiler using Code::Blocks 17.12 IDE. Note that, for different values of $\lambda_i$ we can form the Pareto front and may have different behaviors of the proposed model and algorithms. However, for the purpose of the study, we did not approach the multi-objective nature of the problem and considered, for each test described in this section, $\lambda_i = 1$, with $i = 1, \ldots, 4$.

\subsection{Test instances generation}
\label{T_instance_g}

In this subsection, we describe how we generate the set of benchmark instances used in this section. We introduce a set of instances that are based on data arising from a possible real-world scenario. The different instances represent a sample of the variability of the problem's parameters, such as number of beam types, number of molds, and mold lengths.

In Table \ref{P2_instances_description} we present details about each test instance parameter. We can see that the number of packing patterns increases as the number of beam types increases. However, the number of cutting and overlapping patterns remains constant because of the fact that we expect that the possible distinct bar lengths are standardized in real-world scenarios and therefore do not lead to variability.

\begin{table}[H]
\caption{Description of test instances}
\centering
\begin{tabular}{ccccccccccccccc}
\hline
\textbf{Instance} & {$C$} & {$M$} & {$T$} & {$r$} & {$H$} & {$O$} & \textit{\textbf{}} & \textbf{Instance} & {$C$} & {$M$} & {$T$} & {$r$} & {$H$} & {$O$} \\ \cline{1-7} \cline{9-15} 
cwp001 & 1 & 15 & 6 & 145 & 10 & 12 &  & cwp036 & 4 & 30 & 20 & 715 & 10 & 12 \\
cwp002 & 1 & 15 & 6 & 199 & 10 & 12 &  & cwp037 & 4 & 30 & 24 & 679 & 10 & 12 \\
cwp003 & 1 & 15 & 6 & 236 & 10 & 12 &  & cwp038 & 4 & 30 & 15 & 702 & 10 & 12 \\
cwp004 & 1 & 15 & 6 & 210 & 10 & 12 &  & cwp039 & 4 & 30 & 14 & 732 & 10 & 12 \\
cwp005 & 1 & 15 & 6 & 236 & 10 & 12 &  & cwp040 & 4 & 30 & 30 & 750 & 10 & 12 \\
cwp006 & 1 & 30 & 3 & 257 & 10 & 12 &  & cwp041 & 5 & 15 & 68 & 966 & 10 & 12 \\
cwp007 & 1 & 30 & 3 & 257 & 10 & 12 &  & cwp042 & 5 & 15 & 57 & 927 & 10 & 12 \\
cwp008 & 1 & 30 & 3 & 199 & 10 & 12 &  & cwp043 & 5 & 15 & 66 & 985 & 10 & 12 \\
cwp009 & 1 & 30 & 3 & 218 & 10 & 12 &  & cwp044 & 5 & 15 & 59 & 983 & 10 & 12 \\
cwp010 & 1 & 30 & 3 & 199 & 10 & 12 &  & cwp045 & 5 & 15 & 75 & 1046 & 10 & 12 \\
cwp011 & 2 & 15 & 15 & 414 & 10 & 12 &  & cwp046 & 5 & 30 & 29 & 974 & 10 & 12 \\
cwp012 & 2 & 15 & 21 & 395 & 10 & 12 &  & cwp047 & 5 & 30 & 29 & 926 & 10 & 12 \\
cwp013 & 2 & 15 & 21 & 361 & 10 & 12 &  & cwp048 & 5 & 30 & 24 & 949 & 10 & 12 \\
cwp014 & 2 & 15 & 14 & 387 & 10 & 12 &  & cwp049 & 5 & 30 & 30 & 1008 & 10 & 12 \\
cwp015 & 2 & 15 & 17 & 451 & 10 & 12 &  & cwp050 & 5 & 30 & 27 & 1062 & 10 & 12 \\
cwp016 & 2 & 30 & 8 & 466 & 10 & 12 &  & cwp051 & 6 & 15 & 62 & 1249 & 10 & 12 \\
cwp017 & 2 & 30 & 8 & 352 & 10 & 12 &  & cwp052 & 6 & 15 & 51 & 1204 & 10 & 12 \\
cwp018 & 2 & 30 & 9 & 459 & 10 & 12 &  & cwp053 & 6 & 15 & 51 & 1221 & 10 & 12 \\
cwp019 & 2 & 30 & 8 & 500 & 10 & 12 &  & cwp054 & 6 & 15 & 62 & 1291 & 10 & 12 \\
cwp020 & 2 & 30 & 9 & 466 & 10 & 12 &  & cwp055 & 6 & 15 & 65 & 1371 & 10 & 12 \\
cwp021 & 3 & 15 & 29 & 662 & 10 & 12 &  & cwp056 & 6 & 30 & 21 & 1324 & 10 & 12 \\
cwp022 & 3 & 15 & 36 & 643 & 10 & 12 &  & cwp057 & 6 & 30 & 33 & 1279 & 10 & 12 \\
cwp023 & 3 & 15 & 30 & 614 & 10 & 12 &  & cwp058 & 6 & 30 & 33 & 1305 & 10 & 12 \\
cwp024 & 3 & 15 & 29 & 671 & 10 & 12 &  & cwp059 & 6 & 30 & 35 & 1052 & 10 & 12 \\
cwp025 & 3 & 15 & 35 & 684 & 10 & 12 &  & cwp060 & 6 & 30 & 32 & 1165 & 10 & 12 \\
cwp026 & 3 & 30 & 15 & 589 & 10 & 12 &  & cwp061 & 7 & 15 & 60 & 1427 & 10 & 12 \\
cwp027 & 3 & 30 & 18 & 560 & 10 & 12 &  & cwp062 & 7 & 15 & 86 & 1396 & 10 & 12 \\
cwp028 & 3 & 30 & 18 & 433 & 10 & 12 &  & cwp063 & 7 & 15 & 113 & 1211 & 10 & 12 \\
cwp029 & 3 & 30 & 17 & 620 & 10 & 12 &  & cwp064 & 7 & 15 & 53 & 1438 & 10 & 12 \\
cwp030 & 3 & 30 & 20 & 557 & 10 & 12 &  & cwp065 & 7 & 15 & 89 & 1395 & 10 & 12 \\
cwp031 & 4 & 15 & 45 & 952 & 10 & 12 &  & cwp066 & 7 & 30 & 36 & 1243 & 10 & 12 \\
cwp032 & 4 & 15 & 50 & 650 & 10 & 12 &  & cwp067 & 7 & 30 & 45 & 1568 & 10 & 12 \\
cwp033 & 4 & 15 & 45 & 896 & 10 & 12 &  & cwp068 & 7 & 30 & 38 & 1403 & 10 & 12 \\
cwp034 & 4 & 15 & 41 & 839 & 10 & 12 &  & cwp069 & 7 & 30 & 39 & 1487 & 10 & 12 \\
cwp035 & 4 & 15 & 41 & 783 & 10 & 12 &  & cwp070 & 7 & 30 & 39 & 1494 & 10 & 12 \\ \hline
\end{tabular}
\label{P2_instances_description}
\end{table}

We consider mold capacities of 5.95m and 11.95m, while we take 1.12m, 1.45m, 2.35m, 2.5m, 2.65m, 2.95m, and 3.3m as possible beam lengths. For each instance, the possible curing times may be 1, 2, or 3 periods, chosen randomly when instances have more than 3 types. In addition, if the instance has up to 3 beam types, we associate the curing time to the beam type index, for example the beam type 2 needs a curing time of 2 periods. With respect to the number of bars that some beam type demands, we choose randomly a value between 1 and 3 for each beam type. We choose the beam demands uniformly between 17 and 50. For total time horizon $T$, we calculate it as the ceiling of 150\% of the optimal makespan lower bound, defined by Equation \ref{calculoT_p2} as follows:

\begin{equation}
    T = \ceil*{1.5\cdot{\displaystyle\sum_{i = 1}^C t_c \cdot \left( \sum_{k=1}^{q_c} l(c,k) \cdot d(c,k) \right)}/{\displaystyle\sum_{m =1}^M L_m}}.\label{calculoT_p2}
\end{equation}

For all instances, we consider an unique length of new bars as 12m and the possible lengths of bar leftovers as 2m, 5m, 6m, and 8m. We do not vary such lengths along the test instances since, in practice, it is expected that they are standardized. To generate realistic bar stocks we introduce an upper bound for the number of bars needed to fulfill the beam demand as $UB$, defined in Equation (\ref{UB_2}):

\begin{equation}
    UB = 2\cdot T\cdot M \cdot \max_{D_c = 1, \ldots, C}\{D_c\}. \label{UB_2}
\end{equation}

We set the stock of new bars of length 12m equal to $UB$, whilst we choose the stock of each leftover randomly between $\ceil{UB/5}$ and $UB$ following an uniform distribution. We implemented the instance generator using MATLAB programming language.

\subsection{Computational experiments with the mathematical model}

In this subsection we discuss the results of the computational tests with the benchmark instance set that we generated following the scheme described in Subsection \ref{T_instance_g}. In Table \ref{tabelona_p2} we show the results of the computational experiments for the model (ICP) and its linear relaxation. The solution time was limited to 3,600 seconds. As regards to notation in Table \ref{tabelona_p2}, we consider LB, IP, and LP standing for the optimal objective function value lower bound, best solution value by CPLEX for model (ICP), and its linear relaxation value, respectively. When we say \textit{gap} we mean the relative percentage deviation between the best integer objective and the objective of the best node remaining in the CPLEX $B\&C$ tree, calculated like this: $gap = 100\cdot|bestbound-bestinteger|/(1e-10+|bestinteger|)$ (0\% means a proven optimal solution). We denote by ``B\&C nodes'' the number of nodes generated in the branch-and-cut tree in the solution process, and \textit{t (s)} as the solution time in seconds.

We can see in Table \ref{tabelona_p2} that the linear relaxation of all instances could be solved, with the average time of 53.21 seconds, and with 624.61 seconds being the longest time to get to the optimal solution. On the other hand, only 11 instances could be solved to optimality by the integer programming model (4 of them solved in the root node of the B\&C tree). For 23 instances we could not even find a feasible solution, a situation that we denote by ``--''. Moreover, we could not solve 36 instances to optimality within the time limit, although feasible solutions for them were found. We can infer from the computational test results that the larger the instance parameter values are, the larger the problem is and the most difficult it is to find solutions for it. With high values of the instance parameters, when solutions are found, the optimality gap tends to be worse, i.e. the solutions achieved within the time limit are even further from the optimal solution.

\begin{table}[H]
\caption{Results of integer programming model and its linear relaxation}
\centering
\resizebox{1\linewidth}{!}{
\begin{tabular}{crrrrrlrrlcrrrrrlrr}
\hline
\textbf{} & \multicolumn{1}{c}{\textbf{}} & \multicolumn{4}{c}{\textbf{Mathematical Model}} & \multicolumn{1}{c}{\textbf{}} & \multicolumn{2}{c}{\textbf{Linear Relaxation}} & \multicolumn{1}{c}{\textbf{}} & \textbf{} & \multicolumn{1}{c}{\textbf{}} & \multicolumn{4}{c}{\textbf{Mathematical Model}} & \multicolumn{1}{c}{\textbf{}} & \multicolumn{2}{c}{\textbf{Linear Relaxation}} \\ \cline{3-6} \cline{8-9} \cline{13-16} \cline{18-19} 
\textbf{Instance} & \multicolumn{1}{c}{\textbf{LB}} & \multicolumn{1}{c}{\textbf{IP}} & \multicolumn{1}{c}{\textbf{B\&C nodes}} & \multicolumn{1}{c}{\textbf{gap}} & \multicolumn{1}{c}{\textbf{t (s)}} & \multicolumn{1}{c}{\textbf{}} & \multicolumn{1}{c}{\textbf{LP}} & \multicolumn{1}{c}{\textbf{t (s)}} & \multicolumn{1}{c}{\textbf{}} & \textbf{Instance} & \multicolumn{1}{c}{\textbf{LB}} & \multicolumn{1}{c}{\textbf{IP}} & \multicolumn{1}{c}{\textbf{B\&C nodes}} & \multicolumn{1}{c}{\textbf{gap}} & \multicolumn{1}{c}{\textbf{t (s)}} & \multicolumn{1}{c}{\textbf{}} & \multicolumn{1}{c}{\textbf{LP}} & \multicolumn{1}{c}{\textbf{t (s)}} \\ \cline{1-6} \cline{8-16} \cline{18-19} 
cwp001 & 5.55 & 6.05 & 981 & 0.00\% & 1.2 &  & 3.58 & 0.02 &  & cwp036 & 22.95 & 25.80 & 1,394 & 12.02\% & 3600.0 &  & 15.60 & 8.39 \\
cwp002 & 7.60 & 8.10 & 0 & 0.00\% & 1.2 &  & 6.02 & 0.04 &  & cwp037 & 33.60 & -- & -- & -- & 3600.0 &  & 22.94 & 11.47 \\
cwp003 & 9.25 & 9.70 & 189 & 0.00\% & 1.8 &  & 7.57 & 0.08 &  & cwp038 & 24.30 & 26.30 & 55,260 & 0.37\% & 3600.0 &  & 19.86 & 2.01 \\
cwp004 & 7.50 & 8.20 & 686 & 0.00\% & 2.3 &  & 5.80 & 0.04 &  & cwp039 & 29.05 & 32.00 & 79,965 & 1.32\% & 3600.0 &  & 25.56 & 3.32 \\
cwp005 & 8.45 & 9.70 & 75 & 0.00\% & 2.1 &  & 7.38 & 0.09 &  & cwp040 & 34.65 & -- & -- & -- & 3600.0 &  & 20.43 & 30.10 \\
cwp006 & 8.15 & 8.15 & 0 & 0.00\% & 1.1 &  & 7.51 & 0.07 &  & cwp041 & 66.15 & -- & -- & -- & 3600.0 &  & 35.96 & 148.02 \\
cwp007 & 3.70 & 4.15 & 0 & 0.00\% & 1.3 &  & 2.76 & 0.04 &  & cwp042 & 55.00 & -- & -- & -- & 3600.0 &  & 29.79 & 51.94 \\
cwp008 & 5.40 & 5.50 & 0 & 0.00\% & 0.7 &  & 4.55 & 0.04 &  & cwp043 & 67.00 & -- & -- & -- & 3600.0 &  & 37.59 & 99.51 \\
cwp009 & 6.15 & 7.20 & 3,640,484 & 0.69\% & 3600.0 &  & 5.54 & 0.08 &  & cwp044 & 57.75 & -- & -- & -- & 3600.0 &  & 31.42 & 137.86 \\
cwp010 & 6.35 & 7.00 & 2,455 & 0.00\% & 3.1 &  & 5.83 & 0.05 &  & cwp045 & 69.25 & -- & -- & -- & 3600.0 &  & 32.80 & 118.61 \\
cwp011 & 17.30 & 19.70 & 221,350 & 0.62\% & 3600.0 &  & 12.31 & 0.66 &  & cwp046 & 41.00 & -- & -- & -- & 3600.0 &  & 28.89 & 26.76 \\
cwp012 & 22.85 & 26.85 & 451,142 & 0.32\% & 3600.0 &  & 14.74 & 0.64 &  & cwp047 & 31.65 & 35.90 & 3,240 & 2.09\% & 3600.0 &  & 19.90 & 19.33 \\
cwp013 & 23.10 & 25.50 & 564,930 & 0.32\% & 3600.0 &  & 15.43 & 0.75 &  & cwp048 & 33.80 & 38.15 & 1,112 & 11.12\% & 3600.0 &  & 24.39 & 29.49 \\
cwp014 & 13.95 & 14.90 & 754,750 & 0.39\% & 3600.0 &  & 9.75 & 0.88 &  & cwp049 & 37.55 & 47.05 & 2 & 13.61\% & 3600.0 &  & 24.11 & 11.40 \\
cwp015 & 18.75 & 21.00 & 561,739 & 0.49\% & 3600.0 &  & 12.85 & 0.89 &  & cwp050 & 38.40 & -- & -- & -- & 3600.0 &  & 26.84 & 57.54 \\
cwp016 & 8.45 & 8.90 & 2,422 & 0.00\% & 49.8 &  & 5.84 & 1.19 &  & cwp051 & 61.70 & 77.20 & 0 & 17.09\% & 3600.0 &  & 35.92 & 46.80 \\
cwp017 & 14.05 & 15.70 & 2,711 & 0.00\% & 33.6 &  & 11.55 & 0.60 &  & cwp052 & 52.65 & -- & -- & -- & 3600.0 &  & 35.71 & 56.03 \\
cwp018 & 15.05 & 18.80 & 775,314 & 1.34\% & 3600.0 &  & 11.53 & 0.59 &  & cwp053 & 51.80 & 57.30 & 1379 & 2.87\% & 3600.0 &  & 33.99 & 20.94 \\
cwp019 & 13.60 & 17.50 & 457,947 & 0.59\% & 3600.0 &  & 11.04 & 0.54 &  & cwp054 & 60.35 & -- & -- & -- & 3600.0 &  & 35.63 & 139.89 \\
cwp020 & 14.95 & 17.40 & 406,983 & 1.02\% & 3600.0 &  & 11.40 & 0.82 &  & cwp055 & 64.90 & -- & -- & -- & 3600.0 &  & 36.53 & 45.26 \\
cwp021 & 30.00 & 33.00 & 3,554 & 12.29\% & 3600.0 &  & 18.53 & 5.71 &  & cwp056 & 35.20 & 39.15 & 1289 & 4.42\% & 3600.0 &  & 28.82 & 23.40 \\
cwp022 & 36.40 & 39.85 & 6,188 & 0.98\% & 3600.0 &  & 20.81 & 5.37 &  & cwp057 & 43.15 & -- & -- & -- & 3600.0 &  & 28.48 & 135.68 \\
cwp023 & 31.50 & 35.20 & 6,342 & 5.39\% & 3600.0 &  & 19.36 & 7.23 &  & cwp058 & 48.30 & -- & -- & -- & 3600.0 &  & 34.29 & 71.79 \\
cwp024 & 25.70 & 27.45 & 12,286 & 0.38\% & 3600.0 &  & 13.33 & 3.50 &  & cwp059 & 51.20 & -- & -- & -- & 3600.0 &  & 36.29 & 54.39 \\
cwp025 & 34.65 & 37.40 & 7,495 & 6.44\% & 3600.0 &  & 19.82 & 5.04 &  & cwp060 & 43.30 & 62.35 & 0 & 28.21\% & 3600.0 &  & 29.78 & 40.41 \\
cwp026 & 21.20 & 22.40 & 12,095 & 2.11\% & 3600.0 &  & 15.16 & 5.31 &  & cwp061 & 69.15 & 79.80 & 21 & 27.35\% & 3600.0 &  & 46.85 & 336.39 \\
cwp027 & 22.60 & 24.15 & 7,226 & 5.92\% & 3600.0 &  & 15.03 & 5.03 &  & cwp062 & 76.40 & -- & -- & -- & 3600.0 &  & 40.02 & 469.50 \\
cwp028 & 25.40 & 26.30 & 9,273 & 8.06\% & 3600.0 &  & 18.15 & 6.43 &  & cwp063 & 108.55 & -- & -- & -- & 3600.0 &  & 53.72 & 103.78 \\
cwp029 & 23.50 & 25.80 & 7,529 & 5.25\% & 3600.0 &  & 16.67 & 6.79 &  & cwp064 & 64.85 & 73.20 & 103 & 2.84\% & 3600.0 &  & 48.86 & 25.38 \\
cwp030 & 26.90 & 30.00 & 8,024 & 1.09\% & 3600.0 &  & 18.56 & 4.12 &  & cwp065 & 86.60 & -- & -- & -- & 3600.0 &  & 45.36 & 624.61 \\
cwp031 & 42.80 & 47.00 & 1,530 & 9.04\% & 3600.0 &  & 22.88 & 14.88 &  & cwp066 & 48.45 & -- & -- & -- & 3600.0 &  & 33.76 & 187.76 \\
cwp032 & 50.40 & 58.10 & 365 & 14.42\% & 3600.0 &  & 28.21 & 23.74 &  & cwp067 & 60.95 & -- & -- & -- & 3600.0 &  & 40.96 & 126.30 \\
cwp033 & 44.20 & 50.10 & 3,600 & 12.07\% & 3600.0 &  & 24.26 & 42.86 &  & cwp068 & 62.65 & 82.55 & 204 & 35.26\% & 3600.0 &  & 47.89 & 61.85 \\
cwp034 & 40.70 & 44.10 & 2,102 & 3.91\% & 3600.0 &  & 24.56 & 6.51 &  & cwp069 & 51.05 & -- & -- & -- & 3600.0 &  & 34.18 & 97.62 \\
cwp035 & 43.35 & -- & -- & -- & 3600.0 &  & 27.33 & 12.67 &  & cwp070 & 62.50 & -- & -- & -- & 3600.0 &  & 46.39 & 137.69 \\ \hline
\end{tabular}
}
\label{tabelona_p2}
\end{table}

We compare the results of the integer linear model (ICP), its linear relaxation, and our lower bound, in Equation \ref{CPlowerbound}, for the optimal value of objective function in the chart in Figure \ref{FigureLB}. 

\begin{figure}[!ht]
    \centering
    \caption{Objective function values for integer model solutions, linear relaxation solutions and proposed lower bound value for test instances}
    \includegraphics[width = 0.6\linewidth]{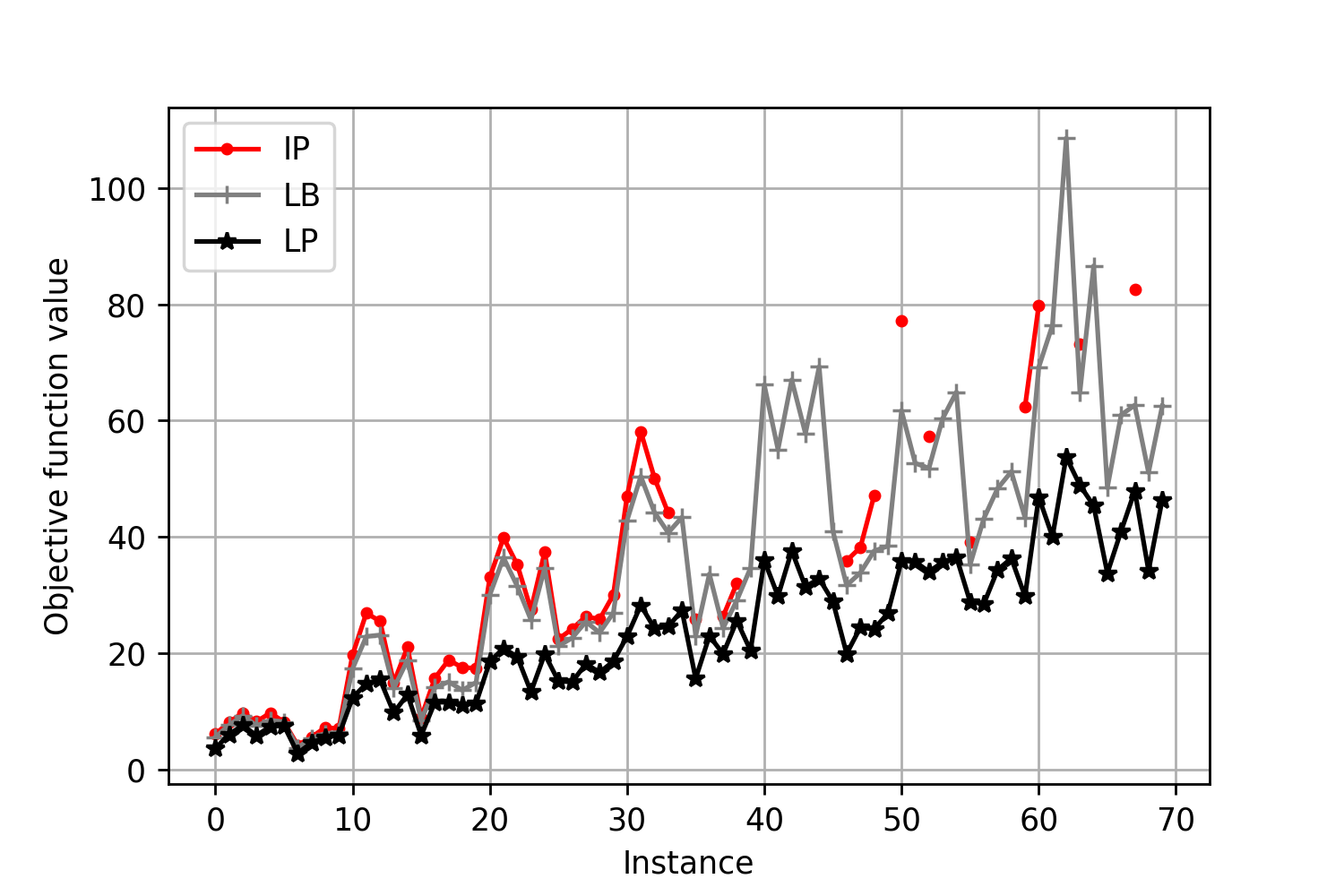}
    \label{FigureLB}
\end{figure}

In Figure \ref{FigureLB}, the lower bound proposed in this work for the optimal objective function value was greater than the linear relaxation for all test instances and highly close to the objective function values obtained by CPLEX.

\subsection{Experimental design and computational experiments with the proposed genetic algorithm}

In order to achieve a better parameterization for the robustness of the proposed genetic algorithm, we apply fractional factorial parameter design. \cite{gholami2009scheduling} used Taguchi experimental design \citep{pignatiello1988overview} to achieve improved robustness of the genetic algorithm which they proposed. In this method the optimal parameter choice is found with the analysis of different level combinations of the control factors in an orthogonal array, with no necessity of testing all of the possible level combinations. \ref{factorlevels} displays the proposed levels for the genetic algorithm parameters (control factors) introduced in Section \ref{P2:GA}.

\begin{table}[H]
\centering
\caption{Factor levels}
\begin{tabular}{llcll}
\hline
\textbf{Factors} &  & \textbf{Index of levels} &  & \textbf{Levels} \\ \hline
$TP$ &  & 1 &  & 25 \\
 &  & 2 &  & 50 \\\hline
$NG$ &  & 1 &  & 500$\cdot r$ \\
 &  & 2 &  & 1000$\cdot r$ \\\hline
$MUT$ &  & 1 &  & 0.01 \\
 &  & 2 &  & 0.025 \\
 &  & 3 &  & 0.05 \\\hline
$RST$ &  & 1 &  & $\ceil{ 0.1 \cdot NG }$\\
 &  & 2 &  & $\ceil{0.2 \cdot NG} $\\\hline
$AS$ &  & 1 &  & 100$\cdot r$ \\
 &  & 2 &  & 500$\cdot r$ \\\hline
$CRS$ &  & 1 &  & Type 1 \\
 &  & 2 &  & Type 2 \\\hline
$TER$ &  & 1 &  & $\ceil{ 0.1 \cdot T r}$ \\
 &  & 2 &  & $\ceil{0.2 \cdot T r}$ \\ \hline
\end{tabular}
\label{factorlevels}
\end{table}

We must have one degree of freedom for total mean, one degree of freedom for each factor with two levels, and two degrees of freedom for the factor with 3 levels amounting to a total of nine degrees of freedom ($1+1\times6+2\times1 = 9$). However, with the control factors and respective levels that we defined, there is no orthogonal array aside from the full factorial array. Thus, we are not able to use a classical Taguchi orthogonal array design. In such circumstances one alternative is to use the \textit{D-optimal} design\citep{de1995d}, which are constructed to minimize the generalized variance of the estimated regression coefficients. Note that D-optimality is only one possible criterion to choose a particular design. We obtain the \textit{D-optimal} design, by Fedorov algorithm \citep{triefenbach2008design} using R programming language for 9 trials for the chosen factors and their respective levels, illustrated in Table \ref{ortho_design}.

\begin{table}[H]
\centering
\caption{\textit{D-optimal} design with 9 trials}
\begin{tabular}{cccccccc}
\hline
\textbf{Trial} & \textbf{\textit{TP}} & \textbf{\textit{NG}} & \textbf{\textit{MUT}} & \textbf{\textit{RST}} & \textbf{\textit{AS}} & \textbf{\textit{CRS}} & \textbf{\textit{TER}} \\ \hline
1 & 1 & 1 & 1 & 2 & 2 & 1 & 1 \\
2 & 1 & 2 & 3 & 1 & 1 & 2 & 1 \\
3 & 1 & 2 & 2 & 1 & 2 & 1 & 2 \\
4 & 1 & 1 & 2 & 2 & 1 & 2 & 2 \\
5 & 2 & 2 & 2 & 2 & 1 & 1 & 1 \\
6 & 2 & 1 & 2 & 1 & 2 & 2 & 1 \\
7 & 2 & 1 & 3 & 1 & 1 & 1 & 2 \\
8 & 2 & 2 & 1 & 1 & 1 & 2 & 2 \\
9 & 2 & 2 & 3 & 2 & 2 & 2 & 2 \\ \hline
\end{tabular}
\label{ortho_design}
\end{table}

Furthermore, the effectiveness characteristic of the genetic algorithms proposed is the expected fitness value, which we seek to minimize, i.e., ``the lower is better principle''. Thus, for increased robustness of the algorithm we use the S/N (signal-to-noise) ratio, defined as follows. Note that the larger the value of S/N ratio the better.

\begin{equation}
    \text{S/N ratio:} \quad \eta_i = - 10 \ln \Bigg(\dfrac{1}{N}\sum_{j = 1}^N FIT_{ij}^2\Bigg),
\end{equation}

\noindent with $i$ and $j$ denoting index of trial and index of replication, while $FIT$ stands for the best objective function value obtained by running the GA. We denote by ``trial'' a certain combination of the control factor levels.

We define a replication as one GA run of some trial for a given instance, and $N$ as the number of test instances multiplied by the number of replications. Since we have an instance set of size 70 and we run each instance 10 times, we perform 700 replications for each trial.

Since CPLEX could not find optimal or even a feasible solution for most instances, we are unable to use the relative percentage deviations from the optimal solution as a performance indicator for the GA. Thus, we use the lower bound relative percentage deviations (LBD) of the fitness values for such purpose. Given a trial $i$ and a replication $j$ the LBD value is defined as follows:

\begin{equation}
    LBD_{ij} = \dfrac{FIT_{ij} - \text{LB}_{j}}{\text{LB}_{j}},
\end{equation}

\noindent where $\text{LB}_{j}$ stands for the lower bound of the optimal objective function value for the test instance used in replication $j$. The $LBD$ for a given trial $i$, denoted by $LBD_i$, is the average $LBD$ for all replications of instance set, as we can see in the following equation:

\begin{equation}
    LBD_{i} = \frac{1}{N} \cdot \sum_{j = 1}^N LBD_{ij},
\end{equation}

The remainder of the experimental design procedure consists of three phases: 

\begin{enumerate}
    \item Evaluate the impacts of the control factors on the S/N ratios and on the LBD values.
    \item For each factor, which has significant impact on the S/N ratios values, we choose the level which increases the S/N ratios.
    \item For the factors, which do not have significant impact on the S/N ratios and have significant impact the LBD values, we choose the level which better approximate the lower bound values.
    \item For the factors, which have significant impact neither on the S/N ratios nor on the LBD values, we select the factor levels regarding the more economic manner, that is, we choose the levels which have less impact on the algorithm running time.
\end{enumerate}

We can see in Table \ref{factors_results} the results after carrying out the computational tests for each trial with the test instance set.

\begin{table}[H]
\label{factors_results}
\caption{LBD, S/N ratio, and average execution time results for each trial}
\centering
\resizebox{\textwidth}{!}{%
\begin{tabular}{cccccccclccr}
\hline
\multirow{2}{*}{\textbf{Trial}} & \multicolumn{7}{c}{\textbf{Control factors}} &  & \multirow{2}{*}{\textbf{\begin{tabular}[c]{@{}c@{}}LBD\\ values\end{tabular}}} & \multirow{2}{*}{\textbf{\begin{tabular}[c]{@{}c@{}}S/N\\ ratios\end{tabular}}} & \multirow{2}{*}{\textbf{\begin{tabular}[c]{@{}c@{}}Average\\ time (s)\end{tabular}}} \\ \cline{2-8}
 & \textbf{\textit{TP}} & \textbf{\textit{NG}} & \textbf{\textit{MUT}} & \textbf{\textit{RST}} & \textbf{\textit{AS}} & \textbf{\textit{CRS}} & \textbf{\textit{TER}} & \textbf{} &  &  &  \\ \cline{1-8} \cline{10-12} 
1 & 1 & 1 & 1 & 2 & 2 & 1 & 1 &  & 0.23719 & -80.01779 & 274.7 \\
2 & 1 & 2 & 3 & 1 & 1 & 2 & 1 &  & 0.28382 & -80.88757 & 149.5 \\
3 & 1 & 2 & 2 & 1 & 2 & 1 & 2 &  & 0.21597 & -79.52707 & 589.7 \\
4 & 1 & 1 & 2 & 2 & 1 & 2 & 2 &  & 0.33001 & -81.74368 & 69.0 \\
5 & 2 & 2 & 2 & 2 & 1 & 1 & 1 &  & 0.20842 & -79.28486 & 245.7 \\
6 & 2 & 1 & 2 & 1 & 2 & 2 & 1 &  & 0.31188 & -81.62437 & 482.4 \\
7 & 2 & 1 & 3 & 1 & 1 & 1 & 2 &  & 0.20360 & -79.18351 & 180.1 \\
8 & 2 & 2 & 1 & 1 & 1 & 2 & 2 &  & 0.32368 & -81.78923 & 255.1 \\
9 & 2 & 2 & 3 & 2 & 2 & 2 & 2 &  & 0.28475 & -80.88293 & 344.0 \\
\hline
\end{tabular}
}
\end{table}

In Figure \ref{fig:main_SN}, we can see the main effects plot for the control factors using S/N ratios as the response variable. In Figure \ref{fig:boxplot_SNR}, we show the boxplots for each factor also using S/N ratios as the response variable. The mean response is clearly influenced by the type of crossover, while it is not so clear to affirm whether or not the other factors influence the response variable.

\begin{figure}[H]
    \centering
    \caption{Main effects plot for S/N ratio for lowerbound deviation values}
    \includegraphics[scale=0.5]{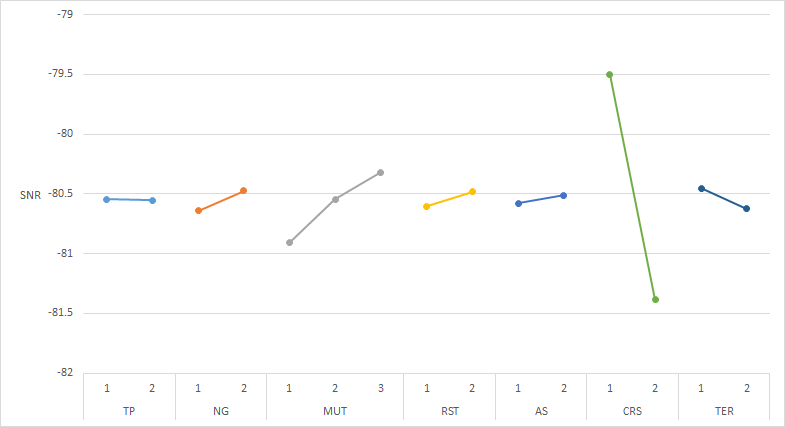}
    \label{fig:main_SN}
\end{figure}

\begin{figure}[H]
    \centering
    \caption{Boxplots for S/N ratio values with each factor}
    \includegraphics[scale = 0.7]{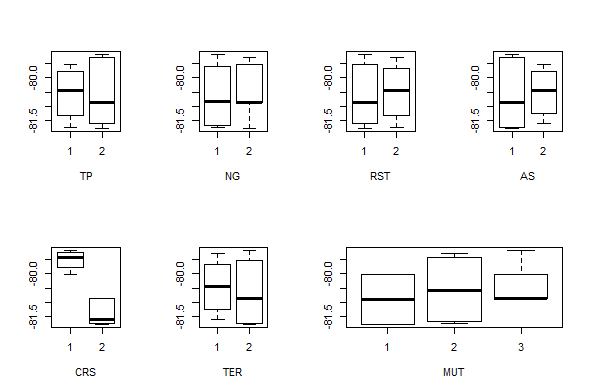}
    \label{fig:boxplot_SNR}
\end{figure}

Adjusting the linear regression model for all seven factors and performing an ANOVA test, we observe that only CRS is statistically significant with $P$-value 0.0259. Then we remove, one by one, the factors whose $P$-value is the greatest and readjust the regression model until all factors are statistically significant obtaining the ANOVA results in Table \ref{anovaSNR_most}.

\begin{table}[H]
\centering
\caption{ANOVA table for S/N ratios for linear regression model fit considering all 7 factors}
\begin{tabular}{lcccrcl}
\cline{1-6}
\textbf{Factor} & \textbf{df} & \textbf{Sum Sq} & \textbf{Mean Sq} & \textbf{$F$ value} & \textbf{$P$-value} &  \\ \cline{1-6}
TP& 1& 0.0002& 0.0002& 0.0125& 0.9290& \\ 
NG& 1& 0.0640& 0.0640& 4.5126& 0.2801& \\ 
MUT& 1& 0.3786& 0.3786& 26.6784& 0.1218& \\ 
RST& 1& 0.0749& 0.0749& 5.2817& 0.2613& \\ 
AS& 1& 0.0292& 0.0292& 2.0585& 0.3875& \\ 
CRS& 1& 8.5564& 8.5564& 602.9946& 0.0259& *\\ 
TER& 1& 0.0196& 0.0196& 1.3807& 0.4489& \\ 
Residuals& 1& 0.0142& 0.0142& & & \\ 
Total& 8& 9.1371& & & & \\ 
\cline{1-6}
Signif. codes: & & ‘*’ 0.05& & & & \\ 
\cline{1-6}
\end{tabular}
\end{table}

\begin{table}[H]
\centering
\caption{ANOVA table for S/N ratio for linear regression model fit considering most significant factors}
\begin{tabular}{lcccrcl}
\cline{1-6}
\textbf{Factor} & \textbf{df} & \textbf{Sum Sq} & \textbf{Mean Sq} & \textbf{$F$ value} & \textbf{$P$-value} &  \\ \cline{1-6}
NG& 1& 0.0627& 0.0627& 5.2218& 0.08431& .\\ 
MUT& 1& 0.3671& 0.3671& 30.5578& 0.00523& **\\ 
RST& 1& 0.0794& 0.0794& 6.6076& 0.06195& .\\ 
CRS& 1& 8.5799& 8.5799& 714.2988& 0.00001& ***\\ 
Residuals& 4& 0.0480& 0.0120& & & \\ 
Total& 8& 9.1371& & & & \\ \cline{1-6}
Signif. codes: & & 0 ‘***’ & ‘**’ 0.01& ‘*’ 0.05&  ‘.’ 0.1& \\  \cline{1-6}
\end{tabular}
\label{anovaSNR_most}
\end{table}

The number of generations, mutation rate, restart, and type of crossover showed to be statistically significant, meaning that we chose the levels whose average S/N ratios are the greater. The parameter levels chosen as a result of the ANOVA test are $1000r$ generations, $0.05$ of mutation rate, $\ceil{0.2r}$ generations with no improvement to apply restart, and crossover type 1.

As regards to the LBD as response variable to the linear regression model. We observe in the main effects plot in Figure \ref{fig:mainE_LBD} and in boxplots in Figure \ref{fig:boxplots_LBD} that LBD have a similar behavior on the control factors. However, we note that, in this case, the lower the LBD value the better.

\begin{figure}[H]
    \centering
    \caption{Main effects plot for lowerbound deviation}
    \includegraphics[scale = 0.]{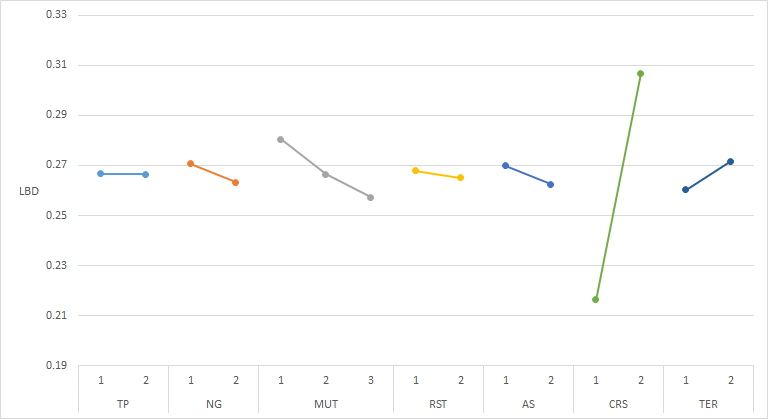}
        \label{fig:mainE_LBD}
\end{figure}

\begin{figure}[H]
    \centering
    \caption{Boxplots for LBD values with each factor}
    \includegraphics[scale = 0.7]{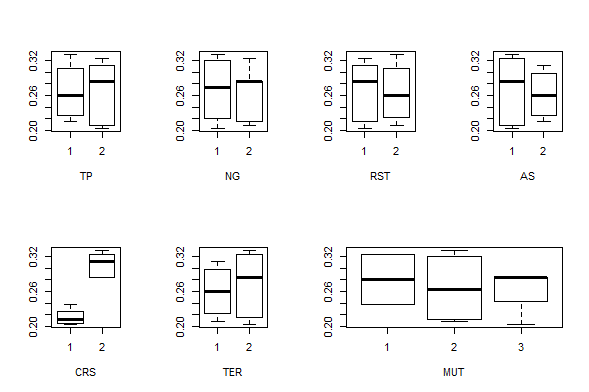}
    \label{fig:boxplots_LBD}
\end{figure}

Adjusting the linear regression model for all the seven factors and performing an ANOVA test using the LDB as response variable, we conclude that only CRS is statistically significant with $P$-value 0.03219. Therefore, we remove from the regression model the variables, one by one, whose $P$-value is the greatest and readjust the model until all factors are statistically significant achieving the ANOVA results illustrated in Table \ref{anovaSNR_most}.

\begin{table}[H]
\centering
\caption{ANOVA table for LBD values for linear regression model fit considering all 7 factors}
\begin{tabular}{lcccrcl}
\cline{1-6}
\textbf{Factors} & \textbf{df} & \textbf{Sum Sq} & \textbf{Mean Sq} & \textbf{$F$ value} & \textbf{$P$-value} &  \\ \cline{1-6}
TP & 1 & 0.00000 & 0.00000 & 0.00340 & 0.96270 &  \\
NG & 1 & 0.00012 & 0.00012 & 2.39350 & 0.36531 &  \\
MUT & 1 & 0.00058 & 0.00058 & 11.48800 & 0.18265 &  \\
RST & 1 & 0.00006 & 0.00006 & 1.23330 & 0.46669 &  \\
AS & 1 & 0.00021 & 0.00021 & 4.22270 & 0.28833 &  \\
CRS & 1 & 0.01960 & 0.01960 & 390.49990 & 0.03219 & * \\
TER & 1 & 0.00011 & 0.00011 & 2.27660 & 0.37261 &  \\
Residuals & 1 & 0.00005 & 0.00005 &  &  &  \\
Total & 8 & 0.02074 &  &  &  &  \\ \cline{1-6}
Signif. codes: &  & ‘*’ 0.05 &  &  &  &  \\ \cline{1-6}
\end{tabular}
\end{table}

\begin{table}[H]
\centering
\caption{ANOVA table for LBD values for linear regression model fit considering most significant factors}
\begin{tabular}{lcccrcl}
\cline{1-6}
\textbf{Factors} & \textbf{df} & \textbf{Sum Sq} & \textbf{Mean Sq} & \textbf{$F$ value} & \textbf{$P$-value} &  \\ \cline{1-6}
MUT	&1	&0.00063	&0.00063	&6.02770	&0.04944 &*\\
CRS	&1	&0.01949	&0.01949	&187.86960	&0.00001 &***\\
Residuals	&6	&0.00062	&0.00010	&	&\\
Total	&8	&0.02074	&	&	&\\
 \cline{1-6}
Signif.   codes:&  & ‘***’ 0 & ‘*’ 0.05 &  &  &  \\ \cline{1-6}
\end{tabular}
\end{table}

Taking into consideration the LBD as response variable to the regression model, only the mutation rate, and type of crossover are statistically significant, meaning that we would choose the mutation rate $0.05$, and crossover type 1. However, these variables were already fixed at the S/N ratios analysis, and no factors that were not statistically significant for the S/N ratios showed to be statistically significant with LBD values. This leads us to choose the levels that would spend less computational time, for the factors whose level was not selected yet. Therefore, the most robust parameterization of the levels for the proposed control factors is: population size 25, $1000 r$ generations, $200r$ generations with no improvement to apply restart, $100 r$ pseudo-random solutions generated in the initial population and restarts, crossover type 1, $5$ preserved individuals upon restart, and mutation rate of $0.05$.

\subsection{Analysis of the final genetic algorithm parameterization}

In order to observe the genetic algorithm behaviour, we run the GA with instance cwp021. Figure \ref{fig:cwp021} illustrates the best fitness and mean fitness of the populations along all generations. The \textit{x-axis} of the Figure \ref{fig:cwp021} is on logarithmic scale. The largest improvement takes place during the first generations of the GA, while in the last ones the best fitness is stagnant with some improvement upon the first restart.

\begin{figure}[H]
    \centering
    \caption{Average and best objective function value curves for instance cwp021 along generations of the selected genetic algorithm parameterization}
    \includegraphics[width = 0.65\linewidth]{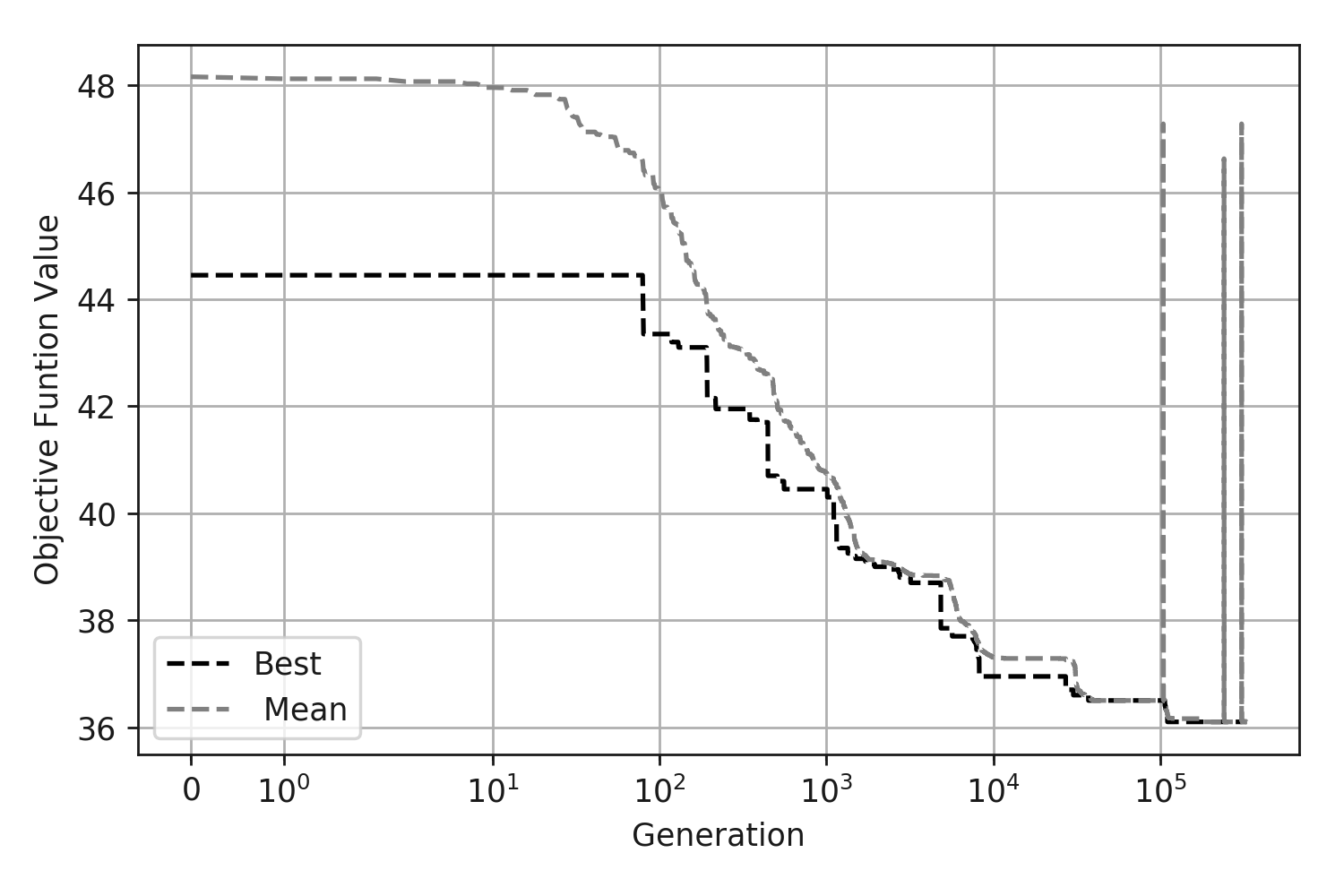}
    \label{fig:cwp021}
\end{figure}

In Figure \ref{fig:LBD_CPLEX_GA}, the best fitness values obtained by running the GA were better than CPLEX in five instances, while solutions were obtained for all instances which CPLEX could not solve.

\begin{figure}[H]
    \centering
    \caption{Lower bound relative deviations for CPLEX and GA with the selected parameterization}
    \includegraphics[width = 0.7\linewidth]{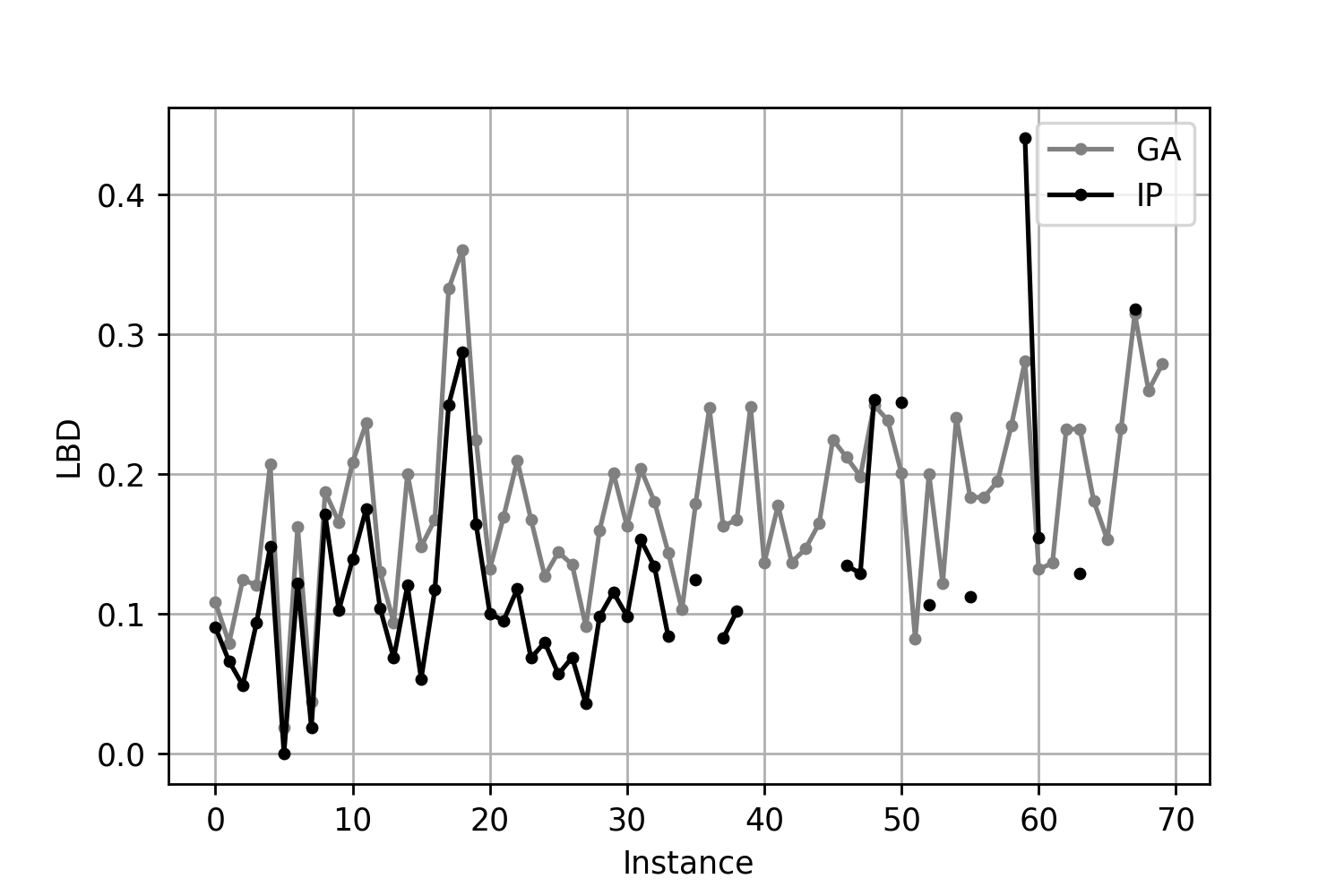}
    \label{fig:LBD_CPLEX_GA}
\end{figure}

In Figures \ref{fig:tempo_ga_cplex} and \ref{fig:tempo_ga_cplex_log}, the time spent by the GA on solving each instance was significantly better than the CPLEX solution time on the large and medium-sized instances. Thus, CPLEX was faster than the GA in the small-sized instances. The \textit{y-axis}  \ref{fig:tempo_ga_cplex_log} is in logarithmic scale.

\begin{figure}[H]
    \centering
    \caption{Mean time for each instance solved by CPLEX and GA with the selected parameterization}
    \includegraphics[width = 0.7\linewidth]{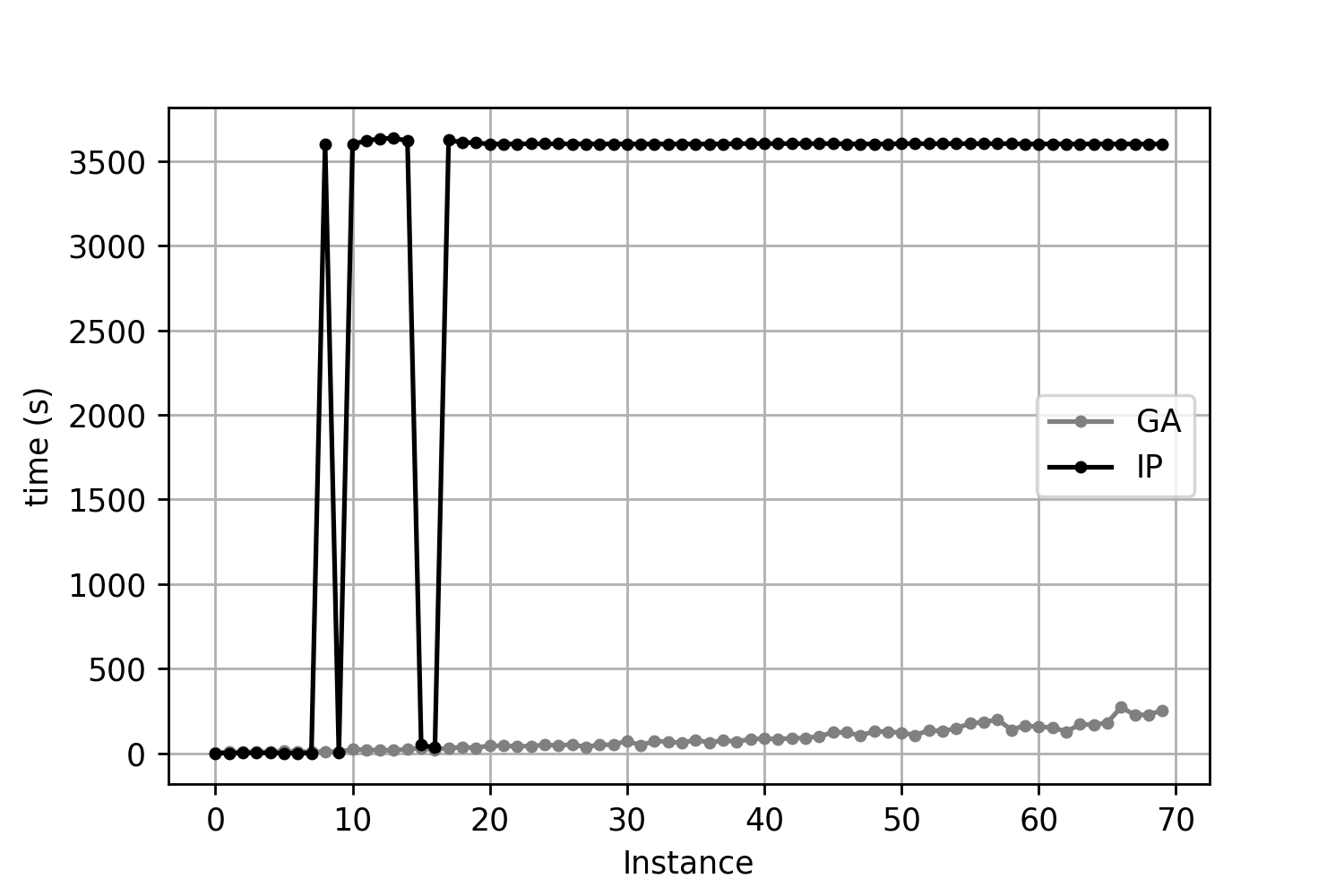}
    \label{fig:tempo_ga_cplex}
\end{figure}

\begin{figure}[H]
    \centering
    \caption{Mean time for each instance solved by CPLEX and GA with the selected parameterization, with y-axis in logarithmic scale}
    \includegraphics[width = 0.7\linewidth]{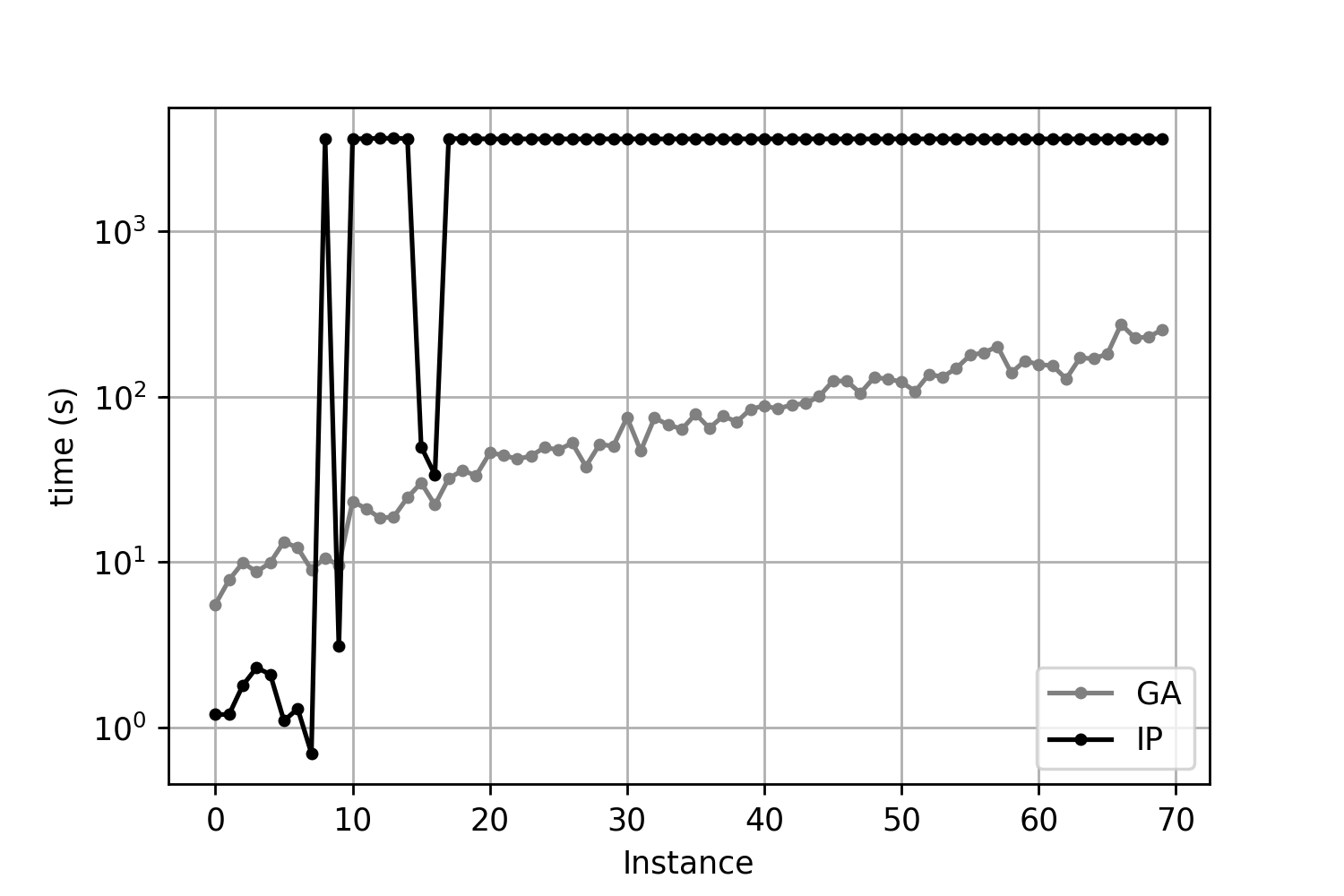}
    \label{fig:tempo_ga_cplex_log}
\end{figure}

\section{Final remarks}
\label{P2:conclusion}

In this work, we proposed a novel variant of cutting sequencing problems called the \textit{integrated cutting and packing heterogeneous precast beams multiperiod production planning} (ICP-HPBMPP), which, to the best of the authors' knowledge, has not yet been studied and may have a large impact on both real-world and theoretical studies. The ICP-HPBMPP consists in integrating the problem of production planning of precast beam with the problem of cutting the traction elements used in such production, while taking into consideration the generation of leftovers and bar generated via overlapping.

We argued that the problem is NP-hard and proposed an integer linear programming model for its solution, in addition to a lower bound on its optimal objective function value. We also showed that restricting the formulation to using exclusively maximal packing patterns does not change the optimal solution set of the problem.

We also proposed three constraint programming models for generating distinct types of beam production patterns. Additionally, we introduced a set of benchmark instances and carried out computational experiments in order to evaluate the relative performance of the different solution methods studied.

The experiments showed that the integer programming model can be used to solve small size instances, while it typically does not reach optimality while solving medium size instances. In addition, the model usually does not find feasible solutions for large size instances. We introduced a genetic algorithm for solving the problem and fine tuned its parameters by means of a \textit{D-optimal} experimental design to achieve improved robustness of the algorithm. 
The final genetic algorithm is an attractive alternative to the integer programming model, resulting in high-quality solutions in shorter solution times as compared with the exact model. 

There are numerous opportunities for future work regarding the ICP-HPBMPP. In the domain of modeling, the problem can be modified to take into consideration distinct types of bars varying in matter of diameter or material, instead of only in matter of length. Also, dynamic demand could be considered, i.e., in each period a new demand of beams could be included, while not exceeding a prescribed stock of bars. Regarding solution approaches, multi-objective optimization algorithms can naturally be applied to the problem, since it involves preferences between makespan and bar waste. Decomposition approaches, such as column generation, or MIP heuristics, e.g., size-reduction heuristics, can also be interesting methods to be explored in conjunction with the proposed integer programming model.

\begin{acknowledgements}
This study was financed in part by the Coordenação de Aperfeiçoamento de Pessoal de Nível Superior - Brasil (CAPES) - Finance Code 001.
\end{acknowledgements}

%
\section*{Conflict of interest}

The authors declare that they have no conflict of interest.

\bibliographystyle{spbasic}      
\bibliography{bibtex}   

\end{document}